\documentclass[a4paper,12pt]{article}

\usepackage{amsmath}
\usepackage{amsfonts} 
\usepackage{amssymb,amsthm}
\usepackage{hyperref}
\usepackage{graphicx}
\usepackage[english]{babel}   
\usepackage{fullpage} 
\usepackage{tikz}
\usepackage{pb-diagram} 
\usepackage[cmtip,arrow]{xy}
\usepackage{pb-xy} 
\usepackage{enumerate}  
\usepackage[utf8]{inputenc} 
\usepackage{stmaryrd}

\usepackage{color}
\definecolor{red}{rgb}{1,.3,0.3}
\definecolor{gray}{gray}{0.5}
\definecolor{blue}{rgb}{0.2,0.5,1}

\newtheorem{de}{Definition}[section]
\newtheorem{theo}[de]{Theorem}
\newtheorem{prop}[de]{Proposition}
\newtheorem{cor}[de]{Corollary}
\newtheorem{lem}[de]{Lemma}
\newtheorem{rem}[de]{Remark}

\newcommand{\bb}[1][R]{\mathbb{#1}}
\newcommand{\sbb}[1][1|1]{\mathbb{R}^{#1}}

\newcommand{\cm}[1][M]{{\mathcal{#1}}} 
\newcommand{\cs}[1][S]{{\mathcal{#1}}} 
\newcommand{\cn}[1][N]{{\mathcal{#1}}}

\newcommand{\om}[1][M]{{\cm[O]_{\cm[#1]}}}

\newcommand{\tm}[1][M]{\cm[T]_{\cm[#1]}}

\newcommand{\pa}[1][q_i]{\partial_{#1}}
\newcommand{\sig}[1][|q_i|]{(-1)^{#1}}

\newcommand{\cmt}{\cm=(M,\om)}

\setlength\parindent{0pt}

\begin{document}

\renewcommand{\thefootnote}{\fnsymbol{footnote}} 

\title{A lossless reduction of geodesics on supermanifolds \\ to non-graded differential geometry}

\author{
{\bf St\'ephane Garnier\footnotemark[1] 
}
\\
{\bf Matthias Kalus\footnotemark[1] \footnotemark[2] 
}
\\
{\small  Fakult\"at f\"ur Mathematik}
\\
{\small Ruhr-Universit\"at Bochum}
\\ 
{\small D-44780 Bochum, Germany}
\\
}

\date{}
\footnotetext[1]{
partially supported by the SFB TR-12 of the Deutsche Forschungsgemeinschaft 
} 

\footnotetext[2]{
corresponding author, e-mail: matthias.kalus@rub.de
} 

\maketitle
\centerline{ {\bf Keywords: } Supermanifolds; Geodesics; Riemannian metrics; Connections}
\vskip0.3cm
\centerline{  {\bf MSC2010:} Primary 58A50, 53C22; Secondary  53B21, 53C05 }

\renewcommand{\thefootnote}{\arabic{footnote}}

\begin{abstract} \noindent
Let $\cm= (M,\mathcal O_\mathcal M)$ be a smooth supermanifold with connection $\nabla$ and Batchelor model $\mathcal O_\mathcal M\cong\Gamma_{\Lambda E^\ast}$. From  $(\cm,\nabla)$ we construct a connection on the total space of the vector bundle $E\to{M}$. This reduction of $\nabla$ is well-defined independently of the isomorphism $\mathcal O_\mathcal M \cong \Gamma_{\Lambda E^\ast}$. It erases information, but however it turns out that the natural identification of supercurves in $\cm$ (as maps from $ \mathbb R^{1|1}$ to $\mathcal M$) with curves in $E$ restricts to a 1 to 1 correspondence on geodesics. This bijection is induced by a natural identification of initial conditions for geodesics on $\cm$, resp.~$E$. 
Furthermore a Riemannian metric on $\mathcal M$ reduces to a symmetric bilinear form on the manifold $E$. Provided that the connection on $\cm$ is compatible with the metric, resp.~torsion free, the reduced connection on $E$ inherits these properties. For  an odd metric, the reduction of a Levi-Civita connection on $\cm$ turns out to be a Levi-Civita connection on $E$.
\end{abstract}

\section{Introduction}        

The analysis on supermanifolds of odd dimension one can be expressed in terms of the analysis on the associated Batchelor line bundle. Moreover morphisms from these objects to supermanifolds are determined by the induced pullback of numerical functions  and sections in the associated Batchelor bundles. 
The study of geodesics regarded as supercurves with domain $\sbb$ on a supermanifold $\cm$, can hence be completely described by classical differential geometry on  the Batchelor bundle of $\cm$. The aim of the present   article is to make this description precise.
\par \smallskip
Provided that $E \to M$ is a Batchelor bundle for the supermanifold $\cm=(M,\mathcal O_\mathcal M)$, there is a bijective correspondence between the set of supercurves  $\sbb \to \cm$ and the set of curves $\bb\to E$. We analyze this correspondence on the level of geodesics $\sbb\to \cm$ of a supermanifold with connection  $(\cm,\nabla)$.  In particular we construct a connection $\nabla^{TE}$ on the manifold $E$ such that the above correspondence of supercurves and curves yields a bijection between geodesics of $(\mathcal M,\nabla)$ and geodesics of $(E,\nabla^{TE})$. 
Assuming that $\nabla$ is torsion-free, resp.~metric with respect to an even or odd Riemannian metric $g$, we prove that $\nabla^{TE}$ is torsion-free, resp.~compatible with a constructed symmetric bilinear form $g^{TE}$ on the manifold $E$. In the case $|g|=1$ the form $g^{TE}$ is non-degenerate, so a Levi-Civita connection $\nabla$ on $(\mathcal M,g)$ induces a Levi-Civita connection $\nabla^{TE}$ on the pseudo-Riemannian manifold $(E,g^{TE})$.   \par \smallskip
Here is a detailed overview: In the second section we  state the well-known bijective correspondence of supercurves, resp.~initial conditions of geodesics on a supermanifold $\cm$ with curves, resp.~initial conditions on the classical manifold $E$, defining a Batchelor bundle for $\cm$  (see \cite{HKST}). Proofs are included for reasons of convenience of the reader. \par \smallskip
 The third section contains the explicit constructions of a connection and a symmetric bilinear form on $E$ induced by the respective objects on $\cm$. The main idea for the construction of $\nabla^{TE}$ and $g^{TE}$ is a natural identification of superfunctions of $\mathbb Z$-grading $0$ and $1$ in the Batchelor model with smooth functions on $E$ which are affine linear in fiber direction. Even (super-)derivations preserving these respective  sets of functions,  form Lie algebras. The identification of these Lie algebras enables us to transport a connection and an even or odd  Riemannian metric from the graded to the classical setting.  Furthermore we show that $\nabla^{TE}$ inherits zero torsion and metric compatibility if $\nabla$ has these properties. We show that the reduction commutes with diffeomorphisms of supermanifolds with connection. In particular it is independent of the choice of a Batchelor model. \par \smallskip
The fourth section then deduces the geodesics condition on $(E,\nabla^{TE})$ from the geodesics condition on $(\mathcal M,\nabla)$. We sum the results up to:
\begin{theo}\label{start} Let $\cm=(M,\mathcal O_\mathcal M)$ be a smooth supermanifold with Batchelor bundle $E\to M$, i.e.~$\mathcal O_\mathcal M\cong\Gamma_{\Lambda E^\ast}$, and connection $\nabla$, resp. Riemannian metric $g$. For the associated connection $\nabla^{TE}$, resp. symmetric bilinear form  $g^{TE}$ on $E$ we have:
\begin{itemize}
 \item[(1)] if $\nabla$ is torsion free, resp. compatible with $g$, then $\nabla^{TE}$ is torsion free, resp. compatible with $g^{TE}$,
 \item[(2)]  if $\Phi \in \text{Mor}(\sbb,\cm)$ is a  geodesic of $(\cm,\nabla)$ with initial condition $A$, then the associated curve $\gamma_\Phi \in \text{Mor}(\bb,E)$ is a geodesic of $(E,\nabla^{TE})$ with initial condition $\alpha_A$ associated to $A$.
\end{itemize}
\end{theo}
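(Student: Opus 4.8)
The plan is to treat both assertions as consequences of two structural facts supplied by the constructions of Section~3: first, that the identification of the even superderivations of $\mathcal{O}_\cm$ preserving the space $\mathcal F$ of superfunctions of $\mathbb Z$-degree $\le 1$ with the corresponding vector fields on $E$ is an \emph{isomorphism of Lie algebras}; and second, that $\nabla^{TE}$ and $g^{TE}$ are defined so that the reduction intertwines $\nabla$ with $\nabla^{TE}$ and $g$ with $g^{TE}$ under this identification together with the identification $\mathcal F\cong\mathcal G$ of grading-$\le1$ superfunctions with fiberwise affine functions on $E$. Granting these, every defining relation of $\nabla$ and $g$ that is phrased purely in terms of such derivations and such functions transports verbatim to $E$.

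For part~(1) I would first address torsion. Writing $\widetilde X,\widetilde Y$ for the reductions of even superderivations $X,Y$, I would expand
\begin{equation*}
T^{TE}(\widetilde X,\widetilde Y)=\nabla^{TE}_{\widetilde X}\widetilde Y-\nabla^{TE}_{\widetilde Y}\widetilde X-[\widetilde X,\widetilde Y]
\end{equation*}
and match each summand with the reduction of the corresponding summand of the super-torsion $T(X,Y)=\nabla_XY-\nabla_YX-[X,Y]$: the bracket term matches because the identification preserves brackets, and the two covariant-derivative terms match by the intertwining property of $\nabla^{TE}$. Hence $T=0$ forces $T^{TE}=0$. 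For metric compatibility I would transport the identity $Z\,g(X,Y)=g(\nabla_ZX,Y)+(-1)^{|Z||X|}g(X,\nabla_ZY)$ term by term; restricting to even derivations kills the super-signs, so the transported identity becomes exactly the ungraded compatibility $Z\,g^{TE}(X,Y)=g^{TE}(\nabla^{TE}_ZX,Y)+g^{TE}(X,\nabla^{TE}_ZY)$ on $E$. The point demanding care is to confirm that the torsion and the compatibility defect of $\nabla$ are \emph{completely} detected on $\mathcal F$, so that their vanishing on the reduced data is equivalent to their vanishing on $\cm$.

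For part~(2) I would start from the local super-geodesic equation for $\Phi\in\text{Mor}(\sbb,\cm)$, written through the super-Christoffel symbols of $\nabla$, and invoke the bijection $\Phi\mapsto\gamma_\Phi$ of Section~2 together with the fact from Section~3 that the Christoffel symbols of $\nabla^{TE}$ are the reductions of those of $\nabla$. Substituting the component expansion of $\Phi$ in the odd coordinate of $\sbb$ into $\nabla_{\dot\Phi}\dot\Phi=0$ and collecting homogeneous components yields a system of ODEs in the component functions of $\gamma_\Phi$, which I expect to assemble precisely into the classical geodesic equation of $(E,\nabla^{TE})$. Since the correspondence $A\mapsto\alpha_A$ matches the initial data of the two systems and geodesics are uniquely determined by their initial conditions, the bijection on geodesics then follows at once.

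The main obstacle is the bookkeeping in this last expansion: the supercurve and its velocity carry components of different $\mathbb Z$-degree, so splitting $\nabla_{\dot\Phi}\dot\Phi=0$ along the odd direction produces several component equations that must fuse into a single geodesic equation on $E$ with the reduced Christoffel symbols, with all super-signs correctly absorbed. The conceptual reason I expect this to succeed despite the reduction erasing information is that the initial condition $A$ of a geodesic lives in $\mathbb Z$-degree $\le 1$, so the entire geodesic stays within the subspace $\mathcal F$ that the reduction captures faithfully; the discarded higher-degree data never enters the geodesic equation.
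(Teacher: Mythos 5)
Your proposal follows essentially the same route as the paper: part (1) is obtained by transporting the torsion and the compatibility defect through the Lie-algebra isomorphism $\Psi|_{\mathcal L_{\bar 0}}$ and the intertwining property of $\nabla^{TE}$ and $g^{TE}$, and part (2) by comparing the local super-geodesic equations (which, since $\sbb$ has odd dimension one, only see the $\mathbb Z$-degree $\le 1$ data) with the classical geodesic equations on $E$ via the Christoffel symbols of Lemma \ref{chris} and the curve identification $\Phi\mapsto\gamma_\Phi$. One small correction: for part (1) you do not need vanishing on the reduced data to be \emph{equivalent} to vanishing on $\cm$ — only the forward implication is required, combined with the facts that torsion and $\Theta_{\nabla^{TE},g^{TE}}$ are tensorial on $E$ and that $\mathcal L_{\bar 0}^\prime$ generates $\mathcal T_E$ as a $\mathcal C^\infty_E$-module, which is exactly how the paper closes the argument.
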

\textbf{Notation.} In the following we will denote a smooth finite-dimensional supermanifold $\mathcal M$ by the pair $(M,\mathcal O_\mathcal M)$ where $M=|\cm |$ is the underlying classical manifold and $\mathcal O_\mathcal M$ is the sheaf of superfunctions. We denote  by $\widetilde{f}$  the reduction of a superfunction $f$. Furthermore we will frequently identify a supermanifold $\mathcal M$ with the associated functor of points or generalized supermanifold
$$Y(\cm):\text{\textbf{SM}}^{\text{op}} \to \text{\textbf{Set}},\quad Y(\cm)(\mathcal S)=\text{\textbf{SM}}(\cs,\cm)$$
where $\text{\textbf{SM}}$ denotes the category of supermanifolds. We often write $\mathcal M(\mathcal S)$ for $\text{\textbf{SM}}(\mathcal S,\mathcal M)$ for given supermanifolds $\mathcal S$ and $\mathcal M$. Finally we recall the ``inner Hom'' functor associating to supermanifolds $\cm$ and $\cm[N]$  the generalized supermanifold $\underline{\text{\textbf{SM}}}(\cn,\cm):\text{\textbf{SM}}^{\text{op}}\to\text{\textbf{Set}}$ defined by
$\underline{\text{\textbf{SM}}}(\cn,\cm)(\cs):=\text{\textbf{SM}}(\cs\times \cn,\cm)$. \par \smallskip
By $\Pi$ we denote the {parity reversal} on the category of locally free sheaves of $\mathcal O_\mathcal M$-modules over $\cm$.  On classical vector bundles  we define $\Pi$ to be the Batchelor functor
 $$\text{\textbf{VB}}_{M}\to \text{\textbf{SM}}, \quad (E\to M) \mapsto \Pi E\ \widehat{=}\ (M,\Gamma_{\Lambda E^\ast}) \ .$$\par\smallskip
For certain calculations we use local coordinates of the bundle $\pi:E \to M$. Let $U \subset M$ be a coordinate domain for the coordinates $(x_i)$, let $(e_\alpha)$ be a basis of the vector space $F:=\mathbb R^{rank(E)}$ inducing coordinates $(e_\alpha^\ast)$ on $F$ and let $E|_U \cong U\times F$ be a local trivialization for the bundle $E \to M$ with coordinates $(x_i,e_\alpha^\ast)$.  The coordinates locally induce  derivations on $\mathcal C^\infty_E$ denoted by $\partial_i:=\partial_{x_i}$ and $\partial_\alpha:=\partial_{e_\alpha^\ast}$. Let  $\mathcal T_E$ denote the sheaf of derivations of $\mathcal C^\infty_E$.  
Furthermore the basis $(e_\alpha)$ induces by dualization a local frame also denoted $(e_\alpha^\ast)$ of the bundle $E^\ast \to M$ and hence local coordinates $(x_i,e_\alpha^\ast)$ for the supermanifold $\cm= \Pi E$. We obtain local superderivations on $\Gamma_{\Lambda E^\ast}$ by $\hat \partial_i:=\partial_{x_i}$ acting only on the coefficient functions of the products of the $e_\alpha^\ast$, and $\hat \partial_\alpha:=\partial_{e_\alpha^\ast}$ mapping $\mathcal C^\infty_M$ to zero.  \par \smallskip
\textbf{Acknowledgments.}  The authors would like to  thank Tilmann Wurzbacher for  discussions and advice during the completion of this article. 

\section{Supercurves and initial conditions}
We recall the natural identification of (super-)curves and initial conditions for geodesics in $\mathcal M=\Pi E$, resp. $E$. Since we will use the notion of the tangent bundle of a supermanifold as well as the notion of the sheaf of superderivations on superfunctions, we first remind the correspondence of locally free sheaves of $\mathcal O_\mathcal M$-modules of finite rank on the supermanifold $\mathcal M$ (category $\text{\textbf{LFS}}_{\cm}$) with  super vector bundles on $\mathcal M$ (category $\text{\textbf{SVB}}_{\cm}$). We quote from \cite{Schmitt}:
\begin{prop}\label{LFSvsSVB}\cite{Schmitt}  The categories $\text{\textbf{LFS}}_{\cm}$ and $\text{\textbf{SVB}}_{\cm}$ are equivalent. An equivalence is given by  
\[
 \begin{array}{rcl}
  \text{\textbf{LFS}}_{\cm} & \longrightarrow & \text{Fun}(\text{\textbf{SM}}^{\text{op}},\text{\textbf{Set}})\\
 \cm[P] & \longmapsto &  \sigma_\mathcal P:=\left(\cs  \longmapsto  \{(f,h)|f\in \cm(\cs)\text{ and }h\in(f^*\cm[P])_{\bar 0}(S)\} \right)                       
 \end{array}
\]
and the morphism of supermanifolds $\sigma_\mathcal P \to \mathcal M$ induced by the projection $(f,h) \mapsto f $.
\end{prop}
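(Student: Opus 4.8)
The plan is to upgrade the functor $\sigma$ into an equivalence in two moves: first show that each $\sigma_{\cm[P]}$ is \emph{representable} by a super vector bundle over $\cm$, so that $\sigma$ factors as $\text{\textbf{LFS}}_{\cm}\to\text{\textbf{SVB}}_{\cm}\hookrightarrow \text{Fun}(\text{\textbf{SM}}^{\text{op}},\text{\textbf{Set}})$; then exhibit a quasi-inverse to the induced functor $\text{\textbf{LFS}}_{\cm}\to\text{\textbf{SVB}}_{\cm}$ given by the sheaf of sections, and check that both composites are naturally isomorphic to the identity. Throughout, local freeness of $\cm[P]$ lets me reduce every global claim to the case of a free module over a chart, while the Yoneda lemma translates natural transformations of representable functors into honest bundle morphisms.

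For representability, fix an open cover $\{U_i\}$ of $M$ trivializing $\cm[P]$, so that $\cm[P]|_{U_i}\cong\om^{\,r|s}|_{U_i}$ where $(r|s)$ is the rank of $\cm[P]$. For a test morphism $f\colon\cs\to\cm$ whose image lies over $U_i$ one has $f^\ast\cm[P]\cong\cm[O]_{\cs}^{\,r|s}$, and I claim its even global sections are naturally the $\cs$-points of $\sbb[r|s]$: an even section assigns even coefficients to the $r$ even generators and odd coefficients to the $s$ odd generators, which is precisely the datum of a morphism $\cs\to\sbb[r|s]$. Hence $\sigma_{\cm[P]}|_{U_i}$ is represented by $\cm|_{U_i}\times\sbb[r|s]$, compatibly with the projection to $\cm$. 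The transition data of $\cm[P]$ on overlaps are even automorphisms of $\om^{\,r|s}$, i.e.\ a $\mathrm{GL}(r|s)$-valued cocycle on $M$; this is exactly the cocycle gluing the local total spaces $\cm|_{U_i}\times\sbb[r|s]$ into a super vector bundle $V_{\cm[P]}\to\cm$. Since the glued functor is locally representable over an open cover and the local representations agree on overlaps, it is representable, and the projection $(f,h)\mapsto f$ endows $V_{\cm[P]}$ with a fiberwise-linear bundle structure.

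For the quasi-inverse, send a super vector bundle $\pi\colon V\to\cm$ to its sheaf of sections $G(V)$, with the natural $\bb[Z]_2$-grading in which even sections are genuine sections and odd sections arise from the odd fiber directions (equivalently, from sections of $V$ with an extra odd parameter). This $G(V)$ is a locally free $\om$-module of the same rank $(r|s)$, and the local identifications above yield natural isomorphisms $G(V_{\cm[P]})\cong\cm[P]$ and $V_{G(V)}\cong V$. Full faithfulness then follows locally: a morphism $\cm[P]\to\cm[Q]$ of free modules is a matrix over $\om$ of the appropriate parity, and by Yoneda applied to the representing bundles such matrices correspond bijectively to fiberwise-linear $\cm$-morphisms $V_{\cm[P]}\to V_{\cm[Q]}$; the local correspondences glue because they are compatible with the transition cocycles. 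Essential surjectivity is the content of representability together with the isomorphism $V_{G(V)}\cong V$.

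The main obstacle is the representability step: one must verify with care that the \emph{even} part $(f^\ast\cm[P])_{\bar 0}$ really reproduces the full super fiber $\sbb[r|s]$ — so that the odd generators of $\cm[P]$ become the odd fiber coordinates of $V_{\cm[P]}$ — and that the sheaf-theoretic $\mathrm{GL}(r|s)$-cocycle and the bundle transition cocycle literally coincide. The dual subtlety appears in the quasi-inverse, where the odd part of $G(V)$ must be correctly recovered from the odd fiber directions of $V$; once these parity bookkeeping points are settled, gluing and Yoneda make the remaining verifications routine.
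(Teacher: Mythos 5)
The paper does not prove this proposition at all --- it is quoted verbatim from Schmitt's report \cite{Schmitt} --- so there is no internal proof to compare against; your argument has to be judged on its own. It is the standard one and it is essentially sound: the key computation, that for $\mathcal P|_{U_i}\cong\mathcal O_{\mathcal M}^{\,r|s}|_{U_i}$ an even global section of $f^\ast\mathcal P\cong\mathcal O_{\mathcal S}^{\,r|s}$ is an $r$-tuple of even plus an $s$-tuple of odd functions on $\mathcal S$, hence exactly a morphism $\mathcal S\to\mathbb R^{r|s}$ by the chart theorem for morphisms of supermanifolds, is correct and is the heart of the matter; the identification of the $\mathrm{GL}(r|s)$-valued transition cocycle of the sheaf with that of the glued bundle, the sheaf-of-sections quasi-inverse (with the odd part recovered via an odd parameter, i.e.\ via $\underline{\text{\textbf{SM}}}(\mathbb R^{0|1},-)$), and full faithfulness by Yoneda are all as in the standard treatment. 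The one step you should tighten is the passage from local to global representability: a test morphism $f\colon\mathcal S\to\mathcal M$ need not factor through any single chart $U_i$, so you must first observe that $\sigma_{\mathcal P}$ is a sheaf in $\mathcal S$ (which holds because $(f^\ast\mathcal P)_{\bar 0}$ is a sheaf on $S$), then decompose $f$ over the pulled-back cover $\{f^{-1}(U_i)\}$ of $S$ and glue the locally defined points of $\mathcal M|_{U_i}\times\mathbb R^{r|s}$ there; ``locally representable over an open cover of $M$'' only yields representability once this descent argument is in place. With that made explicit, the proof is complete.
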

As usual we will denote the tangent, respectively cotangent bundles of a supermanifold $\mathcal M$ by $T\cm=(TM,\cm[O]_{T\cm})$, resp. $T^*\cm=(T^*M,\cm[O]_{T^*\cm})$ and the associated locally free sheaves by $\tm=Der_\mathbb R(\mathcal O_\mathcal M)$ and $\Omega_{\cm}^1=Hom_{\mathcal O_\mathcal M}(\mathcal T_\mathcal M,\mathcal O_\mathcal M)$ respectively. The above proposition allows to make sense of $\Pi$ in the category of super vector bundles.
\subsection{Identification of curves}\label{curve}
The main tool in comparing supercurves in $\mathcal M$ with curves in $E$ is the following theorem we quote (see, e.g.~\cite{HKST}):
\begin{theo}\label{SM} 
For a supermanifold $\mathcal M$ we have an isomorphism of generalized supermanifolds:
\begin{equation}\label{eq:91}
 \underline{\text{\textbf{SM}}}(\sbb[0|1],\cm)\cong \Pi T\cm
\end{equation} 
\end{theo}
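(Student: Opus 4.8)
The plan is to test the claimed isomorphism on an arbitrary probe supermanifold $\cs$ and to produce a bijection
$$\underline{\text{\textbf{SM}}}(\sbb[0|1],\cm)(\cs)=\text{\textbf{SM}}(\cs\times\sbb[0|1],\cm)\;\longleftrightarrow\;\sigma_{\Pi T\cm}(\cs)$$
that is natural in $\cs$. Since generalized supermanifolds are functors $\text{\textbf{SM}}^{\text{op}}\to\text{\textbf{Set}}$, such a natural family of bijections \emph{is} an isomorphism, and by Proposition \ref{LFSvsSVB} the right-hand functor is exactly $\Pi T\cm$. By definition a morphism $\cs\times\sbb[0|1]\to\cm$ is a morphism of sheaves of superalgebras $\phi^*\colon\mathcal O_\mathcal M\to\mathcal O_{\cs\times\sbb[0|1]}$, so the whole statement reduces to an algebraic analysis of such pullbacks.

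The key computation uses $\mathcal O_{\cs\times\sbb[0|1]}\cong\mathcal O_\cs\oplus\mathcal O_\cs\,\theta$, where $\theta$ is the odd coordinate of $\sbb[0|1]$ with $\theta^2=0$. Every pullback then decomposes uniquely as $\phi^*=\phi_0^*+\theta\,\phi_1^*$ with $\phi_0^*,\phi_1^*\colon\mathcal O_\mathcal M\to\mathcal O_\cs$, where $\phi_0^*$ preserves parity and $\phi_1^*$ raises it by one. Expanding the homomorphism condition $\phi^*(fg)=\phi^*(f)\phi^*(g)$, moving $\theta$ to the left through the (graded) factors and using $\theta^2=0$ to kill the top term, and collecting the $\theta^0$ and $\theta^1$ components, I expect to obtain precisely two conditions: that $\phi_0^*$ is an even algebra homomorphism, i.e.~an $\cs$-point $f\in\cm(\cs)$, and that
$$\phi_1^*(fg)=\phi_1^*(f)\,\phi_0^*(g)+(-1)^{|f|}\,\phi_0^*(f)\,\phi_1^*(g),$$
so that $\phi_1^*$ is an odd $\phi_0^*$-derivation of $\mathcal O_\mathcal M$ into $\mathcal O_\cs$.

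The final step is to read this data off intrinsically. An odd $f$-derivation $\mathcal O_\mathcal M\to\mathcal O_\cs$ is exactly an odd element of the pulled-back tangent sheaf, i.e.~an element of $(f^*\tm)_{\bar 1}(S)$; since $\tm=Der_\mathbb R(\mathcal O_\mathcal M)$ and parity reversal swaps components, this equals $(f^*\Pi\tm)_{\bar 0}(S)$. Hence the pair $(\phi_0^*,\phi_1^*)$ is precisely a pair $(f,h)$ with $f\in\cm(\cs)$ and $h\in(f^*\Pi\tm)_{\bar 0}(S)$, which by Proposition \ref{LFSvsSVB} is an $\cs$-point of $\sigma_{\Pi T\cm}$; the map recording the underlying point $f$ matches the bundle projection $\Pi T\cm\to\cm$.

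I expect the main obstacle to be bookkeeping rather than conceptual: getting the graded signs in the Leibniz identity right (they arise from commuting $\theta$ past odd elements of $\mathcal O_\cs$, and their placement depends on writing $\theta$ on the left), and verifying naturality in $\cs$, namely that the correspondence is compatible with pullback along every morphism $\cs'\to\cs$, so that it really is an isomorphism of functors. One should also phrase the decomposition $\phi^*=\phi_0^*+\theta\,\phi_1^*$ sheaf-theoretically and check that it globalizes over the body; since every construction here is local and natural, this presents no genuine difficulty.
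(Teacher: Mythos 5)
Your argument is correct and is essentially the standard proof of this statement: the paper itself only quotes Theorem \ref{SM} from \cite{HKST} without supplying a proof, and your decomposition $\phi^*=\phi_0^*+\theta\,\phi_1^*$ of the pullback along $\mathcal S\times\mathbb R^{0|1}\to\mathcal M$ into an even algebra morphism (an $\mathcal S$-point $f$) and an odd derivation along $f$, identified with an even section of $f^*\Pi\mathcal T_{\mathcal M}$ and hence with an $\mathcal S$-point of $\sigma_{\Pi\mathcal T_{\mathcal M}}$ via Proposition \ref{LFSvsSVB}, is exactly that argument. The sign in your Leibniz identity is the one that actually comes out of moving $\theta$ past $\phi_0^*(f)$, and naturality in $\mathcal S$ is immediate since the decomposition is compatible with precomposition by any $\mathcal S'\to\mathcal S$.
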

We can further simplify the right side of equation (\ref{eq:91}) in the case of a fixed Batchelor model, i.e.~an isomorphism $\mathcal O_\mathcal M \to \Gamma_{\Lambda E^\ast}$:
\begin{prop}\label{E}
Let $E\to M$ be a real vector bundle over a classical real manifold $M$. Then we have 
$$|\Pi T\Pi E|\cong E\ . $$
\end{prop}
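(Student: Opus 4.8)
The plan is to reduce the statement to a computation of the even part of a reduced super vector bundle, for which Proposition \ref{LFSvsSVB} supplies the right framework. First I would recall that $\cm := \Pi E = (M,\Gamma_{\Lambda E^\ast})$ and that, by Proposition \ref{LFSvsSVB}, the super vector bundle $\Pi T\cm$ is described by its functor of points $\sigma_{\Pi\tm}$. Evaluating this functor on the terminal object $\sbb[0|0]$ (equivalently, passing to reduced points) yields the underlying set of $|\Pi T\cm|$ as $\{(p,v)\ |\ p\in M,\ v\in (\widetilde{\Pi\tm})_{\bar 0}|_p\}$, where $\widetilde{\Pi\tm}:=\Pi\tm\otimes_{\mathcal O_\cm}\mathcal C^\infty_M$ is the vector bundle over $M$ obtained by pulling the sheaf back along $M\hookrightarrow\cm$. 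Thus $|\Pi T\cm|$ is canonically the total space of the classical vector bundle $(\widetilde{\Pi\tm})_{\bar 0}$, and it remains only to identify this bundle with $E$.

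Second, I would use that parity reversal commutes with the reduction $(-)\otimes_{\mathcal O_\cm}\mathcal C^\infty_M$ and exchanges the graded components, so that
\[
(\widetilde{\Pi\tm})_{\bar 0}\;\cong\;(\Pi\,\widetilde{\tm})_{\bar 0}\;=\;(\widetilde{\tm})_{\bar 1},
\]
the odd part of the reduced tangent bundle of $\cm=\Pi E$. In the local coordinates $(x_i,e_\alpha^\ast)$ the sheaf $\tm$ is freely generated by the even derivations $\hat\partial_i$ and the odd derivations $\hat\partial_\alpha=\partial_{e_\alpha^\ast}$; after reduction the $\hat\partial_i$ span $TM=(\widetilde{\tm})_{\bar 0}$, while the $\hat\partial_\alpha$ furnish a local frame of $(\widetilde{\tm})_{\bar 1}$.

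Third, I would identify $(\widetilde{\tm})_{\bar 1}$ with $E$ by matching the local frames $\hat\partial_\alpha\leftrightarrow e_\alpha$. Concretely $\partial_{e_\alpha^\ast}$ acts on $\Gamma_{\Lambda E^\ast}$ as contraction with the basis vector $e_\alpha$ of $F=\mathbb R^{\mathrm{rank}(E)}$, so on each trivializing domain $U$ the assignment $\hat\partial_\alpha\mapsto e_\alpha$ is an isomorphism onto $E|_U$. The essential point is to check that these local isomorphisms glue: under a change of trivialization the frame $(e_\alpha^\ast)$ of $E^\ast$ transforms contragrediently to the frame $(e_\alpha)$ of $E$, whence $\partial_{e_\alpha^\ast}$ transforms exactly as $e_\alpha$. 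Consequently the transition functions of $(\widetilde{\tm})_{\bar 1}$ coincide with those of $E$ (and not of $E^\ast$), and the local identifications assemble to a global isomorphism $(\widetilde{\tm})_{\bar 1}\cong E$ of vector bundles over $M$. Combining the three steps gives $|\Pi T\Pi E|\cong E$.

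The main obstacle is this last step: one must verify carefully that the reduced odd tangent bundle is $E$ rather than its dual, which hinges on the contragredient transformation behaviour of $\partial_{e_\alpha^\ast}$, and one must set up the functor-of-points description of $|\Pi T\cm|$ so that exactly the even fiber coordinates survive the reduction while the nilpotent odd ones are killed. The remaining bookkeeping — that the smooth structure induced on the total space matches that of $E$ — is routine once the frames have been matched compatibly.
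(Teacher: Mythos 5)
Your argument is correct and follows essentially the same route as the paper: both invoke Proposition \ref{LFSvsSVB} to describe $|\Pi T\Pi E|$ through its functor of points, observe that reduction kills the components along the $\hat\partial_i$ and leaves the part locally framed by the $\hat\partial_\alpha$, and identify the latter with $E$ via dualization (contraction with the $e_\alpha$). The only presentational difference is that the paper evaluates the functor on arbitrary classical manifolds $S$ and concludes $\sigma_{\mathcal P}(S)\cong E(S)$ naturally in $S$, so the smooth structure comes for free from Yoneda, whereas you evaluate at a point and then recover the bundle and manifold structure by matching frames and checking the contragredient transformation behaviour --- a step the paper compresses into the phrase ``via dualization''.
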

\begin{rem}\label{rem} The left hand side must be interpreted as follows: $T\Pi E  $ is the tangent
bundle $T(\Pi E )\to \Pi E $ of the supermanifold $\Pi E $ and $\Pi T\Pi E $ is the parity reversal of the super vector bundle $T(\Pi E )\to \Pi E $.
\end{rem}
\textit{Proof of Proposition \ref{E}.} Denote the locally free sheaf associated to $\Pi T\Pi E $ by $\mathcal P$. It is locally spanned by superderivations with reversed parity. In local coordinates $(x_i,e^\ast_\alpha)$ of $E$, the even part of $\mathcal P$ is spanned by $f_i\hat{\partial}_{i}$ and $f_\alpha\hat{\partial}_{\alpha}$ with $|f_i|=1$ and $|f_\alpha|=0$. The underlying manifold of $\Pi T\Pi E $ is given by the restriction of the functor $\sigma_\mathcal P$ in Proposition \ref{LFSvsSVB} to manifolds. For a manifold $S$ the set $\sigma_\mathcal P(S)$ consists of pairs $(f,h)$ for which $f$ can be identified with a morphism in $M(S)$ and the pullback $(f^*\cm[P])_{\bar 0}$ cancels the $f_i\hat{\partial}_{i}$ and reduces the coefficient functions in the $f_\alpha\hat{\partial}_{\alpha}$. So via dualization $h \in (f^*\cm[P])_{\bar 0}$ is a morphism $S \to E$ over $f:S \to M$ with respect to the projection $E\to M$. We have $\sigma_\mathcal P(S)\cong E(S)$ globally and naturality in $S$ can be easily shown as well.   \hfill   $\Box$ \par\bigskip

We obtain a 1:1 correspondence between the set of supercurves in $\cm$, i.e., morphisms
from $\sbb$ to $\cm$, and the set of curves in $E$:
\begin{cor}\label{T} Let $\cm$ be a supermanifold. If $E\to M$ is a real vector bundle such that $\cm$ is isomorphic to $\Pi E $ then $\cm(S \times \bb^{0|1})\cong E(S)$ for any classical manifold $S$. In particular 
$\cm(\sbb)\cong E(\bb)$ .
\end{cor}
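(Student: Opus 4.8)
The plan is to assemble the identifications already established and then specialize. First I would unwind the definition of the inner Hom functor: for a classical manifold $S$ one has, directly from the definition $\underline{\text{\textbf{SM}}}(\cn,\cm)(\cs):=\text{\textbf{SM}}(\cs\times \cn,\cm)$, the equality $\underline{\text{\textbf{SM}}}(\sbb[0|1],\cm)(S) = \text{\textbf{SM}}(S\times\sbb[0|1],\cm) = \cm(S\times\sbb[0|1])$. Thus the left-hand side of the assertion is literally the value at $S$ of the generalized supermanifold $\underline{\text{\textbf{SM}}}(\sbb[0|1],\cm)$, so no work is needed here beyond bookkeeping.

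Next I would invoke Theorem \ref{SM}, which supplies an isomorphism of generalized supermanifolds $\underline{\text{\textbf{SM}}}(\sbb[0|1],\cm)\cong \Pi T\cm$. Evaluating this natural isomorphism at $S$ gives $\cm(S\times\sbb[0|1])\cong (\Pi T\cm)(S)$, natural in $S$, and the hypothesis $\cm\cong\Pi E$ lets me replace $\Pi T\cm$ by $\Pi T\Pi E$. The crucial reduction step is then to exploit that $S$ is purely even: any morphism from a classical manifold $S$ into a supermanifold $\cm[N]$ factors through the reduction $|\cm[N]|$, since the odd generators of $\cm[O]_{\cm[N]}$ must pull back to nilpotent, hence vanishing, functions on $S$. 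Therefore $(\Pi T\Pi E)(S)=|\Pi T\Pi E|(S)$, and Proposition \ref{E} identifies $|\Pi T\Pi E|$ with $E$, yielding $(\Pi T\Pi E)(S)\cong E(S)$. Equivalently, one may read this identification straight off the proof of Proposition \ref{E}, where the functor $\sigma_{\cm[P]}$ attached to $\Pi T\Pi E$ is shown, upon restriction to manifolds, to satisfy $\sigma_{\cm[P]}(S)\cong E(S)$ naturally in $S$. Chaining the three isomorphisms produces $\cm(S\times\sbb[0|1])\cong E(S)$, natural in $S$, which is the first claim; specializing to $S=\bb$ and using $\bb\times\sbb[0|1]=\sbb$ gives the particular case $\cm(\sbb)\cong E(\bb)$.

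I expect the only genuinely delicate point to be the reduction step together with naturality: one must confirm that the identification of Proposition \ref{E} is compatible with the functor-of-points structure, i.e.~natural in $S$ rather than a mere abstract diffeomorphism of underlying manifolds, so that it composes cleanly with the natural isomorphism of Theorem \ref{SM}. Everything else is a formal consequence of the definitions of the inner Hom functor and the functor of points.
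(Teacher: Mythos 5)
Your argument is correct and coincides with the paper's own proof: both unwind the inner Hom functor, apply Theorem \ref{SM}, use that a classical $S$ only sees the underlying manifold $|\Pi T\Pi E|$, and conclude via Proposition \ref{E}. Your added remarks on why morphisms from a purely even $S$ factor through the reduction, and on naturality in $S$, are sound but not a departure from the paper's route.
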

\begin{proof} We have with Theorem \ref{SM}
$$ \cm(S \times \bb^{0|1}) = \text{\textbf{SM}}(S\times\sbb[0|1],\cm) = \underline{\text{\textbf{SM}}}(\sbb[0|1],\cm)(S) \cong  \Pi T\cm(S) \cong (\Pi T \Pi E  )(S)\ . $$
Since $S$ is classical, the right hand side equals $|\Pi T \Pi E  |(S)$ which is by Proposition \ref{E} isomorphic to $E(S)$. 
\end{proof}

\subsection{Identification of initial conditions} \label{ini}
The initial condition of a  geodesic is a morphism of supermanifolds from $\bb^{0|1}$ to $T \mathcal M$ (see \cite{SG-TW}). Hence we first analyze $T \cm$ for a fixed Batchelor model $\mathcal M=\Pi E$.
\begin{prop}\label{cit2}
Let $E\to M$ be a real vector bundle over a classical real manifold $M$. Then we have 
  $$|\Pi TT\Pi E |\cong TE \ . $$
\end{prop}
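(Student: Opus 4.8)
The plan is to reduce the statement to Proposition~\ref{E} by recognising the tangent supermanifold $T\Pi E$ as itself a Batchelor supermanifold $\Pi B$ for a suitable classical vector bundle $B$, the natural candidate being $B=TE$ viewed as a vector bundle over $TM$ via the tangent map $T\pi\colon TE\to TM$ (a bundle of rank $2\,rank(E)$ with fibre $E_x\oplus E_x$). Granting an isomorphism $T\Pi E\cong\Pi(TE)$ of supermanifolds, functoriality of $T$ and of the parity reversal transports it to
$$\Pi TT\Pi E=\Pi T(T\Pi E)\cong\Pi T\Pi(TE),$$
whereupon Proposition~\ref{E}, applied to the bundle $TE\to TM$ in place of $E\to M$, gives $|\Pi T\Pi(TE)|\cong TE$. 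Since $|\cdot|$ respects the displayed isomorphism, this yields $|\Pi TT\Pi E|\cong TE$, which is exactly the assertion.

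To establish $T\Pi E\cong\Pi(TE)$ I would argue in local coordinates. Over a chart with coordinates $(x_i,e^\ast_\alpha)$ the supermanifold $T\Pi E$ has coordinates $(x_i,\dot x_i,e^\ast_\alpha,\dot e^\ast_\alpha)$, where each velocity inherits the parity of the coordinate it differentiates, so that $x_i,\dot x_i$ are even and $e^\ast_\alpha,\dot e^\ast_\alpha$ are odd. Under a change of chart $x\mapsto\tilde x(x)$ with $e^\ast_\alpha=\sum_\beta g_{\alpha\beta}(x)\,\tilde e^\ast_\beta$ the even coordinates transform as on $TM$, while
$$\dot e^\ast_\alpha=\sum_{\beta,j}\frac{\partial g_{\alpha\beta}}{\partial x_j}\,\dot x_j\,\tilde e^\ast_\beta+\sum_\beta g_{\alpha\beta}(x)\,\dot{\tilde e}^{\,\ast}_\beta.$$
Two features matter. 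First, the odd coordinates $(e^\ast_\alpha,\dot e^\ast_\alpha)$ transform linearly among themselves with coefficients in $\mathcal C^\infty_{TM}$, so $\mathcal O_{T\Pi E}$ is locally the exterior algebra over $\mathcal C^\infty_{TM}$ on these $2\,rank(E)$ generators and $T\Pi E$ is split. Second, the displayed matrices are exactly the fibre-coordinate transition functions of the bundle $T\pi\colon TE\to TM$; hence the odd coordinates $(e^\ast_\alpha,\dot e^\ast_\alpha)$ glue to the linear fibre coordinates on $TE\to TM$, which identifies $T\Pi E$ with $\Pi(TE)$. With this in hand the reduction chain above applies, and the resulting diffeomorphism inherits naturality from Proposition~\ref{E}.

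The main obstacle is precisely this identification $T\Pi E\cong\Pi(TE)$. One must verify that the velocities of the odd coordinates transform linearly, so that no non-canonical Batchelor splitting has to be invoked and splitness is genuinely preserved by $T$; and one must check that the classical bundle so produced is $TE$ equipped with its \emph{secondary} vector bundle structure over $TM$, and not the ordinary projection $TE\to E$. Keeping the parities straight and distinguishing the two competing vector bundle structures on the manifold $TE$ is the principal bookkeeping hazard; once Proposition~\ref{E} is available, the remainder is formal.
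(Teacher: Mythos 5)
Your argument is correct, but it takes a genuinely different route from the paper's. The paper proves the proposition directly: it writes down the locally free sheaf $\hat{\mathcal P}$ associated to $\Pi TT\Pi E$ in the coordinates $(x_i,e^\ast_\alpha,\hat\partial^\ast_j,\hat\partial^\ast_\beta)$ on $T\Pi E$, applies the functor-of-points description of Proposition~\ref{LFSvsSVB}, restricts to a classical $S$ so that the pullback kills the components with odd coefficients, and then identifies $\sigma_{\hat{\mathcal P}}(S)$ with $TE(S)$ by hand, quoting the same classical fact you use, namely that $T\pi\colon TE\to TM$ is a vector bundle with typical fibre $F\oplus F$, to obtain the gluing. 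You instead isolate the intermediate statement $T\Pi E\cong\Pi(TE)$, with $TE$ regarded as a vector bundle over $TM$ via $T\pi$ --- i.e.\ that the tangent functor preserves splitness and turns the Batchelor bundle $E\to M$ into the secondary vector bundle $TE\to TM$ --- and then invoke Proposition~\ref{E} for that bundle. This factorisation is clean and arguably more illuminating: it makes explicit where the second vector bundle structure on $TE$ enters, and the splitness claim is correctly justified by your observation that the velocities of the odd coordinates transform linearly in $(\tilde e^\ast_\beta,\dot{\tilde e}^{\,\ast}_\beta)$ with coefficients in $\mathcal C^\infty_{TM}$, so that the induced atlas of $T\Pi E$ is already adapted and no non-canonical splitting is needed. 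The paper's direct computation avoids having to set up $T\Pi E\cong\Pi(TE)$ as a statement in its own right, but the two proofs rest on the same classical input from \cite{Michor} and produce the same natural identification.
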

\begin{rem}
 The left hand side must be interpreted as follows: $T\Pi E  $ is the tangent
bundle $T(\Pi E )\to \Pi E $ of the supermanifold $\Pi E $ and $\Pi TT\Pi E $ is the parity reversal  of the super vector bundle $T(T\Pi E )\to T \Pi E $.
\end{rem}
\textit{Proof of Proposition \ref{cit2}.} 
We proceed analogously to the proof of Proposition \ref{E}. Denote the locally free sheaf associated to $\Pi TT\Pi  E $ by ${{\hat{\mathcal P}}}$. Local coordinates $(x_i,e^\ast_\alpha)$ on $E$ and the induced derivations on $\mathcal O_{\Pi E }$ induce coordinate functions $(x_i,e^\ast_\alpha,\hat\partial^\ast_j,\hat\partial^\ast_\beta)$ on $T\Pi E $ and associated super derivations $(\hat{\partial}_i,\hat{\partial}_\alpha,\partial_{\hat\partial_j},\partial_{\hat\partial_\beta})$. The even part of ${\hat{\mathcal P}}$ is spanned by $\hat{\partial}_{i}$ and $\partial_{\hat{\partial}_j}$ with odd coefficient superfunctions in $\mathcal O_{T\Pi E }$ and $\hat{\partial}_\alpha$ and $\partial_{\hat{\partial}_\beta}$ with even coefficient functions. Again restricting the associated functor $\sigma_{\hat{\mathcal P}}$ in Proposition \ref{LFSvsSVB} to manifolds, the set $\sigma_{\hat{\mathcal P}}(S)$ consists of pairs $(f,h)$ for which $f$ can be identified with a morphism in $TM(S)$ and $h \in (f^*\cm[P])_{\bar 0}$. Via $f^\ast$ we loose the $\hat{\partial}_{i}$ and ${\partial}_{\hat{\partial}_j}$ components and reduce the coefficient functions of the $\hat{\partial}_\alpha$ and $\partial_{\hat{\partial}_\beta}$ components to functions in $\mathcal O_{TM}$.  Recall the following  classical result (see, e.g~\cite{Michor}):\vspace*{0,2cm} \\ 
\textit{Let $E\overset{p}{\longrightarrow}M$ be a real vector bundle over a classical manifold $M$ with typical fiber $F$.
Then $TE\overset{Tp}{\longrightarrow}TM$ is a vector bundle with typical fiber $TF\cong F\oplus F$.}\vspace*{0,2cm} \\
Hence we obtain for $g$ the gluing properties for maps in $TE(S)$ over $f \in TM(S)$. Globally we obtain $\sigma_{\hat{\mathcal P}}(S)\cong TE(S)$ and the identification is  natural in $S$.
\hfill $\Box$ \par \bigskip
This leads to the identification of the later initial conditions for  geodesics on $\mathcal M$, resp. geodesics on $E$:
\begin{cor}\label{cor:xy}
 Let $\cm$ be a supermanifold. If $E\to M$ is a real vector bundle such that $\cm$ is isomorphic to $\Pi E $ then $T\cm(S\times\sbb[0|1])\cong TE(S)$ for any classical manifold $S$. In particular $T\cm(\sbb[0|1]) \cong TE$.
\end{cor}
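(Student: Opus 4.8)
The plan is to repeat the argument used for Corollary~\ref{T}, but applied one level up: with the supermanifold $T\cm$ in place of $\cm$, and with Proposition~\ref{cit2} in place of Proposition~\ref{E}. The key observation is that $T\cm$ is again a supermanifold, so Theorem~\ref{SM} applies to it verbatim, and the genuine geometric content---that the reduced manifold of $\Pi TT\Pi E$ is $TE$---has already been established in Proposition~\ref{cit2}. Thus the corollary should follow by a purely formal chain of identifications.

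Concretely, first I would unfold the definition of the inner Hom functor to write
$$T\cm(S\times\sbb[0|1]) = \text{\textbf{SM}}(S\times\sbb[0|1],T\cm) = \underline{\text{\textbf{SM}}}(\sbb[0|1],T\cm)(S).$$
Then I would invoke Theorem~\ref{SM} with $\cm$ replaced by $T\cm$, giving the isomorphism of generalized supermanifolds $\underline{\text{\textbf{SM}}}(\sbb[0|1],T\cm)\cong \Pi T(T\cm)=\Pi TT\cm$, and hence $T\cm(S\times\sbb[0|1])\cong \Pi TT\cm(S)$. Using the fixed Batchelor isomorphism $\cm\cong\Pi E$, this becomes $(\Pi TT\Pi E)(S)$. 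Since the test manifold $S$ is classical, probing the generalized supermanifold with $S$ detects only its reduced part, so the right-hand side equals $|\Pi TT\Pi E|(S)$. Finally Proposition~\ref{cit2} identifies $|\Pi TT\Pi E|$ with $TE$, yielding $T\cm(S\times\sbb[0|1])\cong TE(S)$. Taking $S$ to be a point then specializes this to $T\cm(\sbb[0|1])\cong TE$.

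The only step requiring real care is the passage to classical $S$: one must justify that $\Pi TT\cm(S)$ coincides with $|\Pi TT\cm|(S)$ for a classical manifold $S$, i.e.~that testing a generalized supermanifold against ordinary manifolds sees only its underlying reduced manifold. I expect this to be the main (though mild) obstacle, and I would handle it exactly as inside the proof of Proposition~\ref{cit2}: restricting the spanning sheaf's functor $\sigma_{\hat{\mathcal P}}$ to manifolds makes the odd generators $\hat\partial_i$ and $\partial_{\hat\partial_j}$ contribute nothing, leaving precisely the data of a point of $TE(S)$. No new difficulty arises beyond what Proposition~\ref{cit2} already settles; it remains only to record that all the identifications are natural in $S$, which follows as in Corollary~\ref{T}.
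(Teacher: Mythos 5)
Your proposal matches the paper's proof essentially verbatim: the same chain of identifications via the inner Hom functor, Theorem \ref{SM} applied to $T\cm$, restriction to classical $S$ to pass to the reduced manifold, and Proposition \ref{cit2} to identify $|\Pi TT\Pi E|$ with $TE$. The extra care you flag about probing with classical $S$ is indeed the only nontrivial point, and it is handled exactly as in the paper.
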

\begin{proof} Let $S$ be a classical manifold, then using Theorem \ref{SM} 
\begin{eqnarray*}
 T\cm(S\times\sbb[0|1]) & = & \text{\textbf{SM}}(S\times\sbb[0|1],T\cm) =  \underline{\text{\textbf{SM}}}(\sbb[0|1],T\cm)(S) \cong  (\Pi TT\cm)(S)
\end{eqnarray*}
Since $S$ is classical, the right hand side equals $|\Pi TT \Pi E  |(S)$ which is by Proposition \ref{cit2} isomorphic to $TE(S)$. 
\end{proof}

\section{Connections and Riemannian metrics}
We first recall the  basic definitions of connections and metrics on supermanifolds and later construct the associated objects on the total space of the Batchelor bundle. Let $\cmt$ be a supermanifold and $\mathcal{P}$ be a locally free sheaf of $\om$-modules of finite rank on $\cm$.
\begin{de}
A \textbf{connection} on $\mathcal{P}$ is an even morphism $\nabla:\mathcal{P}\to
\Omega^1_{\cm}\otimes_{\om}\mathcal{P}$ of sheaves of real super vector spaces that satisfies the Leibniz rule
\[
 \nabla(f\cdot v)=df\otimes v+f\cdot \nabla (v)
\]
for all  $f\in\om(U)$, $v\in\mathcal{P}(U)$ and all open subsets $U\subset M$. Here $df(X):=\sig[|X||f|]X(f)$ for homogeneous $X\in\tm(U)$ and $f\in\om(U)$. Often we will regard $\nabla$ as a map $\nabla:\mathcal{T}_{\cm}\otimes_\mathbb R \mathcal{P} \to
\mathcal{P}$ that is $\mathcal O_\mathcal M$-linear in the first argument. In the case $\mathcal{P} = {\mathcal T}_\mathcal M$, we call $\nabla$ a {connection on} $\cm$.
\end{de}
 There is a natural graded analogue of \textbf{Christoffel symbols}: Let $(q_s )$ be a system of homogeneous 
coordinates for $\mathcal M$ on an open subset $U\subset M$. The expansion
\[
 \nabla_{\partial_s}\partial_u=\sum_r\Gamma_{su}^r\partial_r
\]
gives elements $\Gamma_{su}^r\in \cm[O]_\mathcal M (U )$ of parity $|\Gamma_{su}^r|=|q_s|+|q_u|+|q_r|$.
Furthermore the \textbf{torsion} of a connection $\nabla$ on $\cm$ is defined by
$$ T_\nabla (X, Y ):= \nabla_X Y-(-1)^{|X||Y |}\nabla_Y X - [X, Y ]$$
for $X,Y \in \tm(U)$.
\begin{de} An \textbf{even, resp. odd Riemannian metric} on a supermanifold $\cm$ is an 
even, resp. odd supersymmetric non-degenerate $\cm[O]_{\cm}$-linear morphism of sheaves
\[
g:\cm[T]_{\cm}\otimes_{\om}\cm[T]_{\cm}\to\cm[O]_{\cm}.
\]
Here non-degeneracy means that the mapping $g^{\natural}:X\mapsto g(X,.)$ is an isomorphism from $\cm[T]_{\cm}$ to
$\Omega^1_{\cm}$, resp. $\Pi\Omega^1_{\cm}.$ 
\end{de}
\begin{rem}\label{rem:b}
Note that an even Riemannian metric requires the odd dimension of $\mathcal M$ to be even, while an odd Riemannian metric requires that even and odd dimension of $\cm$ are equal. Moreover   $|g|=1$ for $M=\Pi E$   requires $E\cong TM$ as is shown in Appendix A. 
\end{rem}
\par \smallskip
Let $\cm$ be equipped with an even or odd Riemannian metric  $g$, then a connection $\nabla$ on $\mathcal M$ is called \textbf{metric} or compatible with $g$ if $\nabla g=0$, i.e.
\[
 \Theta_{\nabla,g}(X,Y,Z):=\sig[|g||X|] X(g(Y\otimes Z)) - g(\nabla_X Y\otimes Z) - (-1)^{|X||Y |}g( Y\otimes \nabla_X
Z) =0
\]
for all homogeneous $X,Y,Z\in\tm(U)$ and all open $U \subset M$. Parallel to the classical case it can be shown that on a supermanifold $\cm$ with an even or odd Riemannian metric $g$, there exists a unique torsion free and metric connection $\nabla$ (see \cite{SG-TW}). It is called the
\textbf{Levi-Civita connection} of the metric and its Christoffel symbols can be expressed explicitly in terms of the metric coefficients. 
\subsection{Construction of a connection and a bilinear form on $E$}\label{sec:con}
In the following we assume $\cm=\Pi E$ for a bundle $\pi:E \to M$, hence $\mathcal O_\mathcal M=\Gamma_{\Lambda E^\ast}$. Later in section \ref{sec} we analyze the stability of our constructions with respect to automorphisms of supermanifolds.
\par \smallskip
For the transport of objects from $\mathcal M$ to $E$ we give an identification of sufficiently large subsheaves $\mathcal F$, resp. $\mathcal F^\prime$ of the algebras of superfunctions on $\mathcal M$, resp. smooth functions on $E$. Moreover we identify the sheaves $\mathcal L_{\bar 0}$, resp. $\mathcal L_{\bar 0}^\prime$ of even (super-)derivations which preserve the subsheaves respectively.
Regard for any open set $U \subset M$
\begin{align*}
&\mathcal F(U):= \Gamma_{\mathbb R \oplus E^\ast}(U) \ \subset  \  \mathcal O_\mathcal M (U) \qquad \mbox{ and }\\
&\mathcal F^\prime(U):=\big\{f \in \mathcal C^\infty_E (\pi^{-1}(U)) \ \big| \ \exists w_f \in \mathcal C^\infty_M(U) \mbox{ such that } \\ & \qquad\qquad\qquad\qquad\qquad\qquad\qquad\qquad\forall \ x \in U:  \ f-\pi^\ast w_f \mbox{ is linear on } E_x \big\} \ .
\end{align*}
The sheaves $\mathcal F$ and $\mathcal F^\prime$ (via the pullback $\pi^\ast:\mathcal C^\infty_M(U) \to \mathcal C^\infty_E(\pi^{-1}(U))$) are sheaves of $\mathcal C^\infty_M$-modules on $M$. By the following map they are  isomorphic:
\begin{align*}
\psi:&\mathcal F(U) \to \mathcal F^\prime(U), \qquad s_{\bar 0}+s_{\bar 1} \mapsto \big(e \mapsto s_{\bar 0}(\pi(e))+s_{\bar 1}(\pi(e))(e)\big)\\
\psi^{-1}:&\mathcal F^\prime(U) \to \mathcal F(U), \qquad f \mapsto \big(x \mapsto w_f(x)+(f-\pi^\ast w_f)|_{E_x}\big)
\end{align*}
Set further:
\begin{align*}
&\mathcal L(U):=\big\{X \in \mathcal T_\mathcal M(U) \ \big| \ X|_{\mathcal F(U)} \subset \mathcal F(U) \big\} \qquad \mbox{ and }\\
&\mathcal L^\prime(U):=\big\{X^\prime \in \mathcal T_E(\pi^{-1}(U)) \ \big| \ X^\prime|_{\mathcal F^\prime(U)} \subset \mathcal F^\prime(U) \big\} \ .
\end{align*}
Conjugation with the isomorphism $\psi:\mathcal F \to \mathcal F^\prime$ yields an isomorphism of sheaves of $\mathcal C^\infty_M$-modules $\Psi:\mathcal L \to \mathcal L^\prime$. We use $\psi$ and $\Psi$ to transport the notion of even and odd elements from $\mathcal F=\mathcal F_{\bar 0}\oplus \mathcal F_{\bar 1}$, resp. $\mathcal L=\mathcal L_{\bar 0}\oplus \mathcal L_{\bar 1}$ to $\mathcal F^\prime$, resp. $\mathcal L^\prime$. The following lemma follows from the construction:
\begin{lem}
 $\Psi|_{\mathcal L_{\bar 0}}:\mathcal L_{\bar 0}\to\mathcal L_{\bar 0}^\prime$ is an isomorphism of sheaves of Lie algebras.
\end{lem}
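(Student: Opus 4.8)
The plan is to reduce the statement to two facts: that $\Psi$ restricts to a bijection $\mathcal{L}_{\bar 0}\to\mathcal{L}_{\bar 0}^\prime$ by the very definition of the transported grading, and that this bijection intertwines the commutator brackets. Since $\mathcal{L}_{\bar 0}^\prime:=\Psi(\mathcal{L}_{\bar 0})$ is by construction the transported even part and $\Psi:\mathcal{L}\to\mathcal{L}^\prime$ is the $\mathcal{C}^\infty_M$-module isomorphism already supplied before the lemma, the restriction $\Psi|_{\mathcal{L}_{\bar 0}}$ is automatically a bijection onto $\mathcal{L}_{\bar 0}^\prime$. The real content is therefore the compatibility with brackets, and this is where I would concentrate.

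First I would verify that the two objects actually carry Lie algebra structures. For the graded side I would observe that $\mathcal{L}$ is closed under the super-commutator: if $X,Y\in\mathcal{T}_\mathcal{M}$ preserve $\mathcal{F}$, then so do the compositions $XY$ and $YX$, hence so does $[X,Y]=XY-(-1)^{|X||Y|}YX$, which is again a superderivation. Restricting to even elements, the super-commutator of two members of $\mathcal{L}_{\bar 0}$ has parity $\bar 0$ and coincides with the ordinary commutator $XY-YX$; thus $(\mathcal{L}_{\bar 0},[\cdot,\cdot])$ is an honest Lie algebra. On the classical side $\mathcal{L}^\prime$ is closed under the commutator of vector fields for the same reason, so $\mathcal{L}_{\bar 0}^\prime$ will inherit a Lie bracket once it is known to be a subalgebra --- a point that is not a priori clear, since the grading on $\mathcal{L}^\prime$ is only the transported one, and which I expect to drop out of the homomorphism property below.

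The core step is the conjugation identity. Writing $\Psi(X)=\psi\circ X\circ\psi^{-1}$ as an operator on $\mathcal{F}^\prime$, for $X,Y\in\mathcal{L}_{\bar 0}$ I would compute on $\mathcal{F}^\prime$
\[
\Psi(X)\Psi(Y)-\Psi(Y)\Psi(X)=\psi\,(XY-YX)\,\psi^{-1}=\psi\,[X,Y]\,\psi^{-1}=\Psi([X,Y]),
\]
the absence of signs being exactly what restricting to the even part buys us. The one point that needs care --- and the step I expect to be the main obstacle --- is that $[\Psi(X),\Psi(Y)]$ denotes the commutator of \emph{genuine} derivations of $\mathcal{C}^\infty_E$, whereas the computation above only takes place on the subsheaf $\mathcal{F}^\prime$. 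To bridge this I would invoke that $\mathcal{F}^\prime$ contains the local coordinate functions $(x_i,e^\ast_\alpha)$ of $E$, so that a derivation of $\mathcal{C}^\infty_E$ is determined by its restriction to $\mathcal{F}^\prime$; since both $[\Psi(X),\Psi(Y)]$ and $\Psi([X,Y])$ are derivations agreeing on $\mathcal{F}^\prime$, they coincide on all of $\mathcal{C}^\infty_E$. This simultaneously shows that $\mathcal{L}_{\bar 0}^\prime$ is closed under the bracket and that $\Psi|_{\mathcal{L}_{\bar 0}}$ is a bracket homomorphism.

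Finally I would note that all the constructions are local and commute with the restriction maps of the sheaves, so that $\Psi$ is a morphism of sheaves; combined with its bijectivity and the bracket compatibility just established, $\Psi|_{\mathcal{L}_{\bar 0}}:\mathcal{L}_{\bar 0}\to\mathcal{L}_{\bar 0}^\prime$ is an isomorphism of sheaves of Lie algebras, as claimed.
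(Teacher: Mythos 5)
Your proof is correct and is precisely the argument the paper leaves implicit (the paper offers no proof beyond the remark that the lemma ``follows from the construction''). The conjugation identity computed on $\mathcal F^\prime$, combined with the observation that a derivation of $\mathcal C^\infty_E$ is determined by its values on the coordinate functions contained in $\mathcal F^\prime$, is exactly the right way to upgrade the operator identity to an identity of vector fields and to see that $\mathcal L_{\bar 0}^\prime$ is closed under the bracket.
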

The projection $pr: \mathcal O_\mathcal M= \mathcal F \oplus \Gamma_{\oplus_{k\geq 2}\Lambda^k E^\ast} \to \mathcal F$ yields a morphism of sheaves of $\mathcal C^\infty_M$-modules $p:\mathcal T_\mathcal M \to Hom_{\mathbb R}(\mathcal F,\mathcal F)$ by $X \mapsto pr\circ X|_{\mathcal F}$. Note that for all $f \in \mathcal C^\infty_M(U), \ s \in \mathcal F(U)$, $X \in \mathcal T_{\mathcal M, \bar 0}(U)$ and open $U\subset M$ it is due to reasons of the degree:
$$(pr\circ X|_\mathcal F)(f\cdot s)=pr\left(X(f)\cdot s+f \cdot X(s)\right)=(pr \circ X)(f)\cdot s+f\cdot (pr \circ X)(s)  $$
Hence $p(X)$ can be uniquely continued to $\mathcal O_\mathcal M$ by the graded Leibniz rule for the odd coordinates. So the  image of the even map $p$ lies in the sheaf $\mathcal L_{\bar 0}$ in a natural way.\par \smallskip 
We use the isomorphisms $\Psi|_{\mathcal L_{\bar 0}}$ and $\psi$ and the projections $p$ and $pr$ to transport items like a connection and later metric from $\cm$ to the target $E$. In both cases we will find that $\mathcal L_{\bar 0}^\prime\subset \mathcal T_E$ is a subspace of sufficient extent to determine the associated item on the target uniquely. We proceed in the following four steps:
\begin{enumerate}
 \item restrict the item on $\mathcal T_{\cm}$ by restriction and projection to an object on $\mathcal L_{\bar 0}$, resp. $\mathcal F_{\bar 0}$
 \item transport the item via the isomorphisms to $\mathcal L_{\bar 0}^\prime$, resp. $\mathcal F_{\bar 0}^\prime$
 \item use auxiliary functions $h \in \mathcal F^\prime$ to extend the item to $\mathcal L^\prime$, resp. $\mathcal F^\prime$
 \item use that $\mathcal L^\prime$ generates $\mathcal T_E$ as a $\mathcal C^\infty_E$-module to extend the item to $\mathcal T_E$
\end{enumerate}
The steps 3 and 4 need further notation. We introduce auxiliary functions for the third step in the following way: For homogeneous $R^\prime \in \mathcal L^\prime(U)$ with sufficiently small open $U \subset M$ there exists an auxiliary function  $h_{R^\prime}\in \mathcal F^\prime(U)$ of the same parity that is non-zero everywhere outside of the zero section and satisfies $h_{R^\prime}R^\prime\in \mathcal L_{\bar 0}^\prime(U)$. \par \smallskip
For the fourth step note that for $V \subset E$ open and $U:=\pi(V)$ we have the surjective multiplication map:
\begin{align}\label{eq:13}
m:\mathcal H_E(V) \to \mathcal T_E(V) \quad \mbox{ on } \quad \mathcal H_E(V):= \mathcal C^\infty_E(V) \otimes_\mathbb R \mathcal L^\prime(U)
\end{align}
Its kernel is generated (as a $\mathcal C^\infty_E$-module) by elements $1\otimes (f\cdot Z^\prime)-f\otimes Z^\prime$ with $Z^\prime \in \mathcal L_{\bar 0}^\prime(U)$ and $f \in \mathcal F_{\bar 0}^\prime(U)$ or $Z^\prime \in \mathcal L_{\bar 1}^\prime(U)$ and $f \in \mathcal F^\prime(U)$.\par \smallskip
We apply the outlined procedure first for the transport of a connection: 
\begin{prop}\label{nabla}
 A connection $\nabla$ on $\mathcal M$ induces a well-defined connection $\nabla^{TE}$ on $E$.
\end{prop}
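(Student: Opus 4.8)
The plan is to realize $\nabla^{TE}$ by carrying out the four-step reduction scheme recorded just before the statement, verifying at each stage that the outcome is forced, so that the auxiliary data — the projection $p$, the functions $h_{R'}$, and the presentation of a vector field through $\mathcal{L}'$ — all drop out. Throughout, $\nabla^{TE}$ is meant to be a connection on $\mathcal{T}_E$, i.e.\ a map $\mathcal{T}_E\otimes_\mathbb{R}\mathcal{T}_E\to\mathcal{T}_E$ that is $\mathcal{C}^\infty_E$-linear in the first slot and satisfies the Leibniz rule in the second.

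For steps (1)–(2) I set, on the sheaf of Lie algebras $\mathcal{L}_{\bar 0}$, $\bar\nabla_X Y:=p(\nabla_X Y)$, which lands in $\mathcal{L}_{\bar 0}$ because $\nabla_X Y$ is even. Since $\mathcal{F}_{\bar 0}=\mathcal{C}^\infty_M$, since $p$ is $\mathcal{C}^\infty_M$-linear and restricts to the identity on $\mathcal{L}_{\bar 0}$ (a derivation is determined by its values on the algebra generators $\mathcal{F}$ of $\mathcal{O}_\mathcal{M}$), and by the degree argument already recorded for $p$, the connection axioms survive the projection: $\bar\nabla$ is $\mathcal{C}^\infty_M$-linear in $X$ and obeys $\bar\nabla_X(fY)=X(f)Y+f\,\bar\nabla_X Y$ for $f\in\mathcal{C}^\infty_M$. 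Conjugating with the Lie-algebra isomorphism $\Psi|_{\mathcal{L}_{\bar 0}}$ and the module isomorphism $\psi$ transports this to an operation $D$ on $\mathcal{L}'_{\bar 0}$ that is $\mathcal{F}'_{\bar 0}$-linear in the first slot and satisfies the Leibniz rule with respect to $\mathcal{F}'_{\bar 0}$ in the second (here one uses that $\Psi$ is an isomorphism of $\mathcal{C}^\infty_M$-modules intertwining the actions, so $X'(\psi f)=\psi(Xf)$).

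For step (3) I extend the second argument to all of $\mathcal{L}'$. For homogeneous $Y'\in\mathcal{L}'(U)$ on a sufficiently small $U$ I pick an auxiliary $h=h_{Y'}\in\mathcal{F}'(U)$ of the same parity with $hY'\in\mathcal{L}'_{\bar 0}(U)$, nonvanishing off the zero section, and set
\[
\nabla^{TE}_{X'}Y':=h^{-1}\big(D_{X'}(hY')-X'(h)\,Y'\big)
\]
away from the zero section, extending by continuity; this is exactly what the Leibniz rule forces once $hY'\in\mathcal{L}'_{\bar 0}$. The first slot is reduced to $\mathcal{L}'$ by imposing $\mathcal{C}^\infty_E$-linearity. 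The content of this step is independence of the chosen $h$, which I would deduce from the Leibniz rule already established for $D$.

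For step (4) I extend over $\mathcal{T}_E$ using the surjection $m$ of (\ref{eq:13}): I write vector fields as $\mathcal{C}^\infty_E$-combinations of elements of $\mathcal{L}'$ and impose $\mathcal{C}^\infty_E$-linearity in the first slot and the Leibniz rule in the second. The resulting prescription defines an operator on $\mathcal{T}_E$ precisely when it annihilates $\ker m$, generated by $1\otimes(fZ')-f\otimes Z'$ with $Z'\in\mathcal{L}'_{\bar 0},\,f\in\mathcal{F}'_{\bar 0}$, resp.\ $Z'\in\mathcal{L}'_{\bar 1},\,f\in\mathcal{F}'$. Verifying this descent is the main obstacle: in the first slot it reduces to the $\mathcal{F}'$-linearity of $D$ and of the odd extension, while in the second slot it is the compatibility of the imposed Leibniz rule with the one inherited from $\nabla$ through $\psi$ and $\Psi$. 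Once the descent holds, the uniqueness built into steps (3)–(4) shows that the locally defined operators agree on overlaps, so they glue to a global $\nabla^{TE}$ on $\mathcal{T}_E$, which satisfies the connection axioms by construction.
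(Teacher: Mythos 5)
Your four-step scheme coincides with the paper's, and Steps 1, 2 and 4 are organized correctly, but there is a genuine gap in Step 3 which then propagates into Step 4. You define $\nabla^{TE}_{X'}Y'=h^{-1}\bigl(D_{X'}(hY')-X'(h)\,Y'\bigr)$ and propose to obtain independence of $h$ (and, implicitly, the smooth extension across the zero section) ``from the Leibniz rule already established for $D$''. The Leibniz rule you have at that point holds only for coefficients $f\in\pi^\ast(\mathcal C^\infty_M)=\mathcal F'_{\bar 0}$, whereas the auxiliary function $h=h_{Y'}$ for odd $Y'$ lies in $\mathcal F'_{\bar 1}$ (it is fiberwise linear), and the quotient of two such choices is not an element of $\mathcal F'$ at all; so no Leibniz rule you have established applies, and a priori $h^{-1}(\cdots)$ need not even extend over the zero section. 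What is needed instead is an explicit evaluation of $D$ on products with odd auxiliary functions, obtained by going back to the graded Leibniz rule of $\nabla$ on $\mathcal M$ and tracking the $\mathbb Z$-degree through the projection $p$:
\[
\nabla'_{h_{X'}X'}(h_{Y'}Y')=h_{X'}X'(h_{Y'})\,Y'+\begin{cases}0, & |X'|=|Y'|=1,\\ h_{X'}h_{Y'}\,\bigl(\Psi\circ p\circ\nabla_{\Psi^{-1}(X')}\Psi^{-1}(Y')\bigr), & \text{else,}\end{cases}
\]
where the vanishing in the odd--odd case comes from the fact that $\psi^{-1}(h_{X'})\psi^{-1}(h_{Y'})$ has Grassmann degree $2$, so its product with any derivation applied to $\mathcal F$ is killed by $pr$. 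This identity is what makes the expression you divide by $h$ manifestly divisible (no continuity argument needed) and independent of all choices, and it is also precisely the input required in your Step 4 to check that the kernel generators $1\otimes(fZ')-f\otimes Z'$ with $Z'\in\mathcal L'_{\bar 1}$, $f\in\mathcal F'_{\bar 1}$ are annihilated. Without this computation --- in particular without noticing the vanishing of the $\nabla$-term in the odd--odd case --- the well-definedness claims in Steps 3 and 4 remain unsupported.
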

\begin{proof}
\underline{Step 1:} The map $\check\nabla:=p \circ \nabla |_{\mathcal L_{\bar 0}\otimes_\mathbb R \mathcal L_{\bar 0}}:\mathcal L_{\bar 0}\otimes_\mathbb R \mathcal L_{\bar 0} \to \mathcal L_{\bar 0} $
which is a morphism of sheaves of $\mathbb R$-modules satisfies
\begin{align}\label{eq:01}
&\check\nabla_{fX}Y=f\check\nabla_XY  \\ \label{eq:02}
&\check\nabla_X(f Y)=X(f)Y+f \check\nabla_XY
\end{align}
 for all $U\subset M$ open, $ X,Y \in \mathcal L_{\bar 0}(U)$ and $f \in \mathcal C^\infty_M(U)$. \par
\underline{Step 2:} Via $\Psi$ we obtain $\nabla^{\prime}:\mathcal L_{\bar 0}^\prime\otimes_\mathbb R \mathcal L_{\bar 0}^\prime \to \mathcal L_{\bar 0}^\prime$ satisfying (\ref{eq:01}) and (\ref{eq:02}) for $ X,Y \in \mathcal L_{\bar 0}^\prime(U), \ f \in \pi^\ast(\mathcal C^\infty_M(U))$. Here we used $\Psi(fZ)=\psi(f)\Psi(Z)$ for $Z \in \mathcal L_{\bar 0}(U)$, $f \in \mathcal C^\infty_M(U)$.\par
\par
\underline{Step 3:} For homogeneous $X^\prime,Y^\prime \in \mathcal L^\prime(U)$ and auxiliary functions  $h_{X^\prime},h_{Y^\prime} \in \mathcal F^\prime(U)$ we have
\begin{align*}
  &\nabla_{h_{X^\prime}X^\prime}^{\prime} (h_{Y^\prime}Y^\prime) \stackrel{(\diamond)}{=} (\Psi \circ p)\Big( \psi^{-1}(h_{X^\prime}) \Psi^{-1}(X^\prime)\left(\psi^{-1}(h_{Y^\prime})\right)\Psi^{-1}(Y^\prime)\\ & \qquad\qquad\qquad\qquad\qquad+(-1)^{|X^\prime||Y^\prime|}\psi^{-1}(h_{X^\prime})\psi^{-1}(h_{Y^\prime}) \nabla_{\Psi^{-1}(X^\prime)}\Psi^{-1}(Y^\prime) \Big) \\ &\qquad\stackrel{(\ast)}{=}h_{X^\prime}X^\prime(h_{Y^\prime})Y^\prime+\left\{\begin{array}{ll}  0 &\ \mbox{for } |X^\prime|=|Y^\prime|=1 \\ h_{X^\prime} h_{Y^\prime}\cdot \left(\Psi \circ p \circ \nabla_{\Psi^{-1}(X^\prime)}\Psi^{-1}(Y^\prime)\right) &\ \mbox{else }  \end{array}\right.
\end{align*}
We have used for $(\diamond)$ that $\Psi^{-1}(h_{Y^\prime}Y^\prime)=\psi^{-1}(h_{Y^\prime})\Psi^{-1}(Y^\prime)$ and for $(\ast)$ additionally that $\nabla$ is an even map.
Hence we set in a unique and well-defined way
$$\nabla^{\prime}_{X^\prime}Y^\prime:=\frac{1}{h_{X^\prime}h_{Y^\prime}}\left(\nabla_{h_{X^\prime}X^\prime}^{\prime} (h_{Y^\prime}Y^\prime)-h_{X^\prime}X^\prime(h_{Y^\prime})Y^\prime\right) \ . $$
Note at this point that $\nabla^{\prime}$ extended to $\mathcal L^\prime \otimes_\mathbb R \mathcal L^\prime$ really maps to $\mathcal L^\prime$ still satisfying (\ref{eq:01}) and (\ref{eq:02}) for  $ X^\prime,Y^\prime \in \mathcal L^\prime(U), \ f \in \pi^\ast(\mathcal C^\infty_M(U))$.\par
\underline{Step 4:} With the notion in (\ref{eq:13}) we continue $\nabla^{\prime}$  to  a map $m\circ \nabla^{\prime}:\mathcal H_E\otimes_\mathbb R \mathcal H_E\to \mathcal T_E$ linearly in the first factor of the first and by Leibniz rule in the first factor of the second argument. By direct calculation this map is zero if any of the two arguments is in the kernel of $m$. (From Step 3 we see that only the case $1\otimes (f\cdot Z^\prime)-f\otimes Z^\prime$, $Z^\prime \in \mathcal L_{\bar 1}^\prime(U)$, $f \in \mathcal F_{\bar 1}^\prime(U)$ needs to be considered in detail -- here  $f$ is a suitable  auxiliary function $h_{Z^\prime}$.) Hence we finally obtain a well-defined map $\nabla^{TE}:\mathcal T_E \otimes_\mathbb R \mathcal T_E \to \mathcal T_E$ satisfying (\ref{eq:01}) and (\ref{eq:02}) for $X,Y\in \mathcal T_E(U)$ and $f \in\mathcal C^\infty_E(U)$.
\end{proof}
A transport analogue to Proposition \ref{nabla} is possible for a metric yielding a symmetric bilinear form:
\begin{prop}\label{g}
 An even or odd Riemannian metric $g$ on $\cm$ induces a well-defined symmetric bilinear form $g^{TE}$ on $E$.
\end{prop}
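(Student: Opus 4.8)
The plan is to imitate the four-step procedure of the proof of Proposition~\ref{nabla} almost verbatim, with two substitutions: the projection $p:\mathcal T_\mathcal M\to Hom_\mathbb R(\mathcal F,\mathcal F)$, which lands in vector fields, is replaced by the projection $pr:\mathcal O_\mathcal M\to\mathcal F$, which lands in functions; and every appeal to the Leibniz rule for $\nabla$ is replaced by the $\mathcal O_\mathcal M$-linearity (tensoriality) of $g$. For Step~1, I would set $\check g:=pr\circ g|_{\mathcal L_{\bar 0}\otimes_\mathbb R\mathcal L_{\bar 0}}:\mathcal L_{\bar 0}\otimes_\mathbb R\mathcal L_{\bar 0}\to\mathcal F$. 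Since $pr$ commutes with multiplication by degree-zero functions, the degree argument already used for $p$ shows that $\check g$ is $\mathcal C^\infty_M$-bilinear, and supersymmetry gives $g(X,Y)=(-1)^{|X||Y|}g(Y,X)=g(Y,X)$ on even arguments, so $\check g$ is symmetric. For Step~2, conjugating with $\psi$ and $\Psi$ (using $\Psi(fZ)=\psi(f)\Psi(Z)$ as before) yields a symmetric $\pi^\ast\mathcal C^\infty_M$-bilinear map $g':\mathcal L_{\bar 0}^\prime\otimes_\mathbb R\mathcal L_{\bar 0}^\prime\to\mathcal F^\prime$.

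For Step~3, given homogeneous $X',Y'\in\mathcal L^\prime(U)$ with auxiliary functions $h_{X'},h_{Y'}\in\mathcal F^\prime(U)$, I would set
\[
 g'(X',Y'):=\frac{1}{h_{X'}h_{Y'}}\,g'(h_{X'}X',h_{Y'}Y'),
\]
the right-hand side being defined by Step~2 since $h_{X'}X',h_{Y'}Y'\in\mathcal L_{\bar 0}^\prime$. Writing $a=\psi^{-1}(h_{X'})$, $b=\psi^{-1}(h_{Y'})$, $Z=\Psi^{-1}(X')$, $W=\Psi^{-1}(Y')$, the $\mathcal O_\mathcal M$-linearity of $g$ gives $g(aZ,bW)=(-1)^{|b||Z|}\,ab\,g(Z,W)$ (the Koszul sign from commuting $b$ past $Z$); applying $pr$ and sorting by $\mathbb Z$-degree then shows that $g'(h_{X'}X',h_{Y'}Y')$ is divisible by $h_{X'}h_{Y'}$ in $\mathcal C^\infty_E$, so the quotient is a genuine smooth function, independent of the chosen auxiliary functions. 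This same bookkeeping exhibits the three regimes needed later: even--even arguments reproduce $pr\circ g$; odd--odd arguments give $0$ (the relevant component lies in degree $\geq 2$ and is killed by $pr$); and mixed arguments vanish when $|g|=0$ but survive when $|g|=1$, which is exactly the dichotomy behind Remark~\ref{rem:b}.

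For Step~4, I extend $g'$ to a $\mathcal C^\infty_E$-bilinear form $g^{TE}$ on $\mathcal T_E$ through the surjection $m$ of (\ref{eq:13}), bilinearly in both slots, and check that the generators $1\otimes(f Z')-f\otimes Z'$ of $\ker m$ are annihilated in each argument. As in Proposition~\ref{nabla}, the computation of Step~3 shows that the only case requiring attention is $Z'\in\mathcal L_{\bar 1}^\prime$ with $f\in\mathcal F_{\bar 1}^\prime$, where $f$ may be taken to be the auxiliary function $h_{Z'}$. Since symmetry is inherited at every step, this produces the desired well-defined symmetric bilinear form $g^{TE}$.

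The hard part will be Step~3: one must verify that, after truncation by $pr$ to $\mathbb Z$-degrees $0$ and $1$, the quotient $g'(h_{X'}X',h_{Y'}Y')/(h_{X'}h_{Y'})$ is a smooth function on \emph{all} of $E$ (not merely off the zero section) and is independent of $h_{X'},h_{Y'}$. The delicate point is that $\check g$ is only $\mathcal C^\infty_M$-bilinear and not $\mathcal F$-bilinear, precisely because $pr$ fails to commute with multiplication by odd functions; the auxiliary-function trick is what repairs this loss and recovers the correct fiberwise-tensorial behaviour, and confirming that it does so in the odd--odd and mixed cases is where the actual content of the argument resides.
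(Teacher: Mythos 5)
Your proposal follows the paper's proof essentially verbatim: the same four-step transport (restrict and truncate with $pr$, conjugate by $\psi$ and $\Psi$, divide out auxiliary functions, extend over the kernel of $m$ from (\ref{eq:13})), with the same $\mathbb Z$-degree case analysis in Step~3 singling out the surviving even--even and, for $|g|=1$, mixed components. The one discrepancy is the sign in your $\mathcal O_\mathcal M$-linearity formula, which records only the Koszul sign $(-1)^{|b||Z|}$ and omits the contribution $(-1)^{|g|(|X^\prime|+|Y^\prime|)}$ from the parity of $g$ appearing in the paper's equation (\ref{eq:15}); this affects the value of $g^{TE}$ on mixed arguments when $|g|=1$ (cf.\ the minus signs in Lemma \ref{metric}) but not the well-definedness or symmetry that the proposition actually asserts.
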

\begin{proof}
\underline{Step 1:} $g$ induces a map $\check g:=pr \circ g|_{\mathcal L_{\bar 0} \otimes \mathcal L_{\bar 0}}:\mathcal L_{\bar 0} \otimes \mathcal L_{\bar 0} \to \mathcal F$.\par
\underline{Step 2:} Via $\Psi$ and $\psi$ we obtain a map $g^{\prime}:\mathcal L_{\bar 0}^\prime \otimes \mathcal L_{\bar 0}^\prime \to \mathcal F^\prime$ which is $\mathcal C^\infty_M$-linear and symmetric. \par
\underline{Step 3:} For homogeneous $X^\prime,Y^\prime \in \mathcal L^\prime(U)$ and $h_{X^\prime},h_{Y^\prime}\in \mathcal F^\prime(U)$ auxiliary functions we have
\begin{align*}
 &g^{\prime}(h_{X^\prime}X^\prime,h_{Y^\prime}Y^\prime)&=(\psi \circ pr)\big((-1)^{|g|(|X^\prime|+|Y^\prime|)}\psi^{-1}(h_{X^\prime})\psi^{-1}(h_{Y^\prime})g(\Psi^{-1}(X^\prime),\Psi^{-1}(Y^\prime))\big)
\end{align*}
\begin{align}\label{eq:15}
 &=\left\{ \begin{array}{ll}h_{X^\prime}h_{Y^\prime} (\psi \circ pr)\big(g(\Psi^{-1}(X^\prime),\Psi^{-1}(Y^\prime))\big)& \mbox{ if } |g|=|X^\prime|=|Y^\prime|=0\\ h_{X^\prime}h_{Y^\prime}(-1)^{(|X^\prime|+|Y^\prime|)} (\psi \circ pr)\big(g(\Psi^{-1}(X^\prime),\Psi^{-1}(Y^\prime))\big)&\mbox{ if }  |g|\cdot (1-|X^\prime||Y^\prime|)=1\\ 0 &\mbox{ else} \end{array} \right.
\end{align}
 So we set
 $$g^{\prime}(X^\prime,Y^\prime):=\frac{1}{h_{X^\prime}h_{Y^\prime}}g^{\prime}(h_{X^\prime}X^\prime,h_{Y^\prime}Y^\prime)$$ 
\underline{Step 4:} 
At last we $\mathcal C^\infty_E$-linearly continue $g^{\prime}$ to $\mathcal H_E\otimes_{\mathcal C^\infty_E} \mathcal H_E$ via (\ref{eq:13}). If any argument of $g^{\prime}$ is in the kernel of $m$, the result will vanish. Hence this construction yields a $\mathcal C^\infty_E$-linear symmetric map $g^{TE}:\mathcal T_E\otimes_{\mathcal C^\infty_E} \mathcal T_E \to \mathcal C^\infty_E$. 
\end{proof}
\subsection{Properties of $\nabla^{TE}$ and $g^{TE}$}\label{coord}
We determine the Christoffel symbols of $\nabla^{TE}$ and metric coefficients of $g^{TE}$ in the notion of coordinates presented in the introduction and compare torsion and metric compatibility with the respective properties of $\nabla$ and $g$. 
We denote the Christoffel symbols of $\nabla$ with respect to $(\hat \partial_i,\hat \partial_\alpha)$ by $\Gamma^{s}_{u,r}\in \mathcal O_\mathcal M(U)$ and the metric coefficients of $g$ by $g_{r,s}$ with $s,u,r$ being any even or odd indexes.
\begin{lem}\label{chris}
 The Christoffel symbols of $\nabla^{TE}$ with respect to $( \partial_i,\partial_\alpha)$ are:
 \begin{align*}
  &{\Gamma^{TE}}_{ij}^k=  \widetilde{\Gamma}_{ij}^k \circ \pi, &&{\Gamma^{TE}}_{ij}^\alpha= \sum_\beta e_\beta^\ast \left(\widetilde {\hat\partial_\beta \Gamma_{ij}^\alpha} \circ \pi\right)\\
  & {\Gamma^{TE}}_{i\alpha}^\gamma=\widetilde{\Gamma}_{i\alpha}^\gamma \circ \pi, && {\Gamma^{TE}}_{\alpha i}^\gamma=\widetilde{\Gamma}_{\alpha i}^\gamma \circ \pi 
 \end{align*}
 $${\Gamma^{TE}}_{i\alpha}^k={\Gamma^{TE}}_{\alpha i}^k={\Gamma^{TE}}_{\alpha\beta}^k={\Gamma^{TE}}_{\alpha\beta}^\gamma=0  $$
\end{lem}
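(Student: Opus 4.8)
The plan is to evaluate $\nabla^{TE}_{\partial_a}\partial_b$ for all pairs $a,b\in\{i,\alpha\}$ by running through the four steps of Proposition \ref{nabla} in the coordinates $(x_i,e^\ast_\alpha)$. The first thing I would do is make the two dictionaries feeding the construction explicit. A parity-and-$\mathbb Z$-degree count on the defining condition $X(\mathcal F)\subset\mathcal F$ shows that $\mathcal L_{\bar 0}$ is spanned over $\mathcal C^\infty_M$ by $\hat\partial_i$ and $e^\ast_\gamma\hat\partial_\alpha$ and that $\mathcal L_{\bar 1}$ is spanned by $\hat\partial_\alpha$; comparing the actions on $\mathcal F$ and $\mathcal F^\prime$ through $\psi$ then gives $\Psi(\hat\partial_i)=\partial_i$, $\Psi(\hat\partial_\alpha)=\partial_\alpha$ and $\Psi(e^\ast_\gamma\hat\partial_\alpha)=e^\ast_\gamma\partial_\alpha$, where on the right $e^\ast_\gamma$ is the fibre coordinate on $E$. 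Finally the projection $p$ becomes transparent in coordinates: on a homogeneous field $\sum_i c_i\hat\partial_i+\sum_\alpha d_\alpha\hat\partial_\alpha$ it keeps the $\mathbb Z$-degree $0$ part $\widetilde{c_i}$ of each even coefficient and the $\mathbb Z$-degree $1$ part $\sum_\beta(\widetilde{\hat\partial_\beta d_\alpha})e^\ast_\beta$ of each odd coefficient.

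With these dictionaries the Christoffel symbols are read off one pair at a time. In the case $(\partial_i,\partial_j)$ both arguments already lie in $\mathcal L^\prime_{\bar 0}$, so $\nabla^{TE}_{\partial_i}\partial_j=\Psi(p(\nabla_{\hat\partial_i}\hat\partial_j))$; inserting $\nabla_{\hat\partial_i}\hat\partial_j=\sum_k\Gamma^k_{ij}\hat\partial_k+\sum_\gamma\Gamma^\gamma_{ij}\hat\partial_\gamma$ (with $|\Gamma^k_{ij}|=0$, $|\Gamma^\gamma_{ij}|=1$) and applying the coordinate forms of $p$ and $\Psi$ yields ${\Gamma^{TE}}^k_{ij}=\widetilde{\Gamma}^k_{ij}\circ\pi$ and ${\Gamma^{TE}}^\alpha_{ij}=\sum_\beta e^\ast_\beta(\widetilde{\hat\partial_\beta\Gamma^\alpha_{ij}}\circ\pi)$. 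For the two mixed cases I would lift the odd argument $\partial_\alpha\in\mathcal L^\prime_{\bar 1}$ into $\mathcal L^\prime_{\bar 0}$ by the auxiliary function $h=e^\ast_\delta$. The identities $\hat\partial_i(e^\ast_\delta)=0$ together with $\mathcal O_\mathcal M$-linearity of $\nabla$ in its first slot let the factor $e^\ast_\delta$ pass through $\nabla$, while $\partial_i(e^\ast_\delta)=0$ kills the correction term, so $e^\ast_\delta$ cancels at the end; a parity count then shows $p$ annihilates the $\hat\partial_k$-components (their even coefficients have no $\mathbb Z$-degree $0$ part), leaving ${\Gamma^{TE}}^\gamma_{i\alpha}=\widetilde{\Gamma}^\gamma_{i\alpha}\circ\pi$, ${\Gamma^{TE}}^\gamma_{\alpha i}=\widetilde{\Gamma}^\gamma_{\alpha i}\circ\pi$ and ${\Gamma^{TE}}^k_{i\alpha}={\Gamma^{TE}}^k_{\alpha i}=0$.

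The case $(\partial_\alpha,\partial_\beta)$ is the one I expect to carry the essential point and to be the main obstacle. Here both arguments are odd, so two auxiliary functions $h_{X^\prime}=e^\ast_\delta$, $h_{Y^\prime}=e^\ast_\epsilon$ are needed, and the branch of the third step of Proposition \ref{nabla} for $|X^\prime|=|Y^\prime|=1$ asserts that the genuine $\nabla$-contribution is annihilated by $p$ (the two odd auxiliary factors raise the $\mathbb Z$-degree by two, beyond the range kept by $pr$); what survives is only $\nabla^\prime_{e^\ast_\delta\partial_\alpha}(e^\ast_\epsilon\partial_\beta)=e^\ast_\delta\,\partial_\alpha(e^\ast_\epsilon)\,\partial_\beta$, which is exactly the term subtracted in the definition of $\nabla^\prime_{X^\prime}Y^\prime$, so after dividing by $e^\ast_\delta e^\ast_\epsilon$ one obtains $\nabla^{TE}_{\partial_\alpha}\partial_\beta=0$ and hence ${\Gamma^{TE}}^k_{\alpha\beta}={\Gamma^{TE}}^\gamma_{\alpha\beta}=0$; this is where the lossy nature of the reduction is visible. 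The only technical nuisance is that $e^\ast_\delta$ vanishes along a hyperplane, not merely on the zero section, so the auxiliary-function step is strictly valid only on $\{e^\ast_\delta\neq0\}$. I would dispose of this by noting that every resulting expression is smooth and independent of the chosen index, so that by the well-definedness already established in Proposition \ref{nabla} the formulas extend from this dense open set to all of $E$.
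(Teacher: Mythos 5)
Your proposal is correct and is essentially the paper's own argument, which is only sketched there in one sentence: you apply the four-step definition of $\nabla^{TE}$ to the coordinate derivations, using the identifications $\Psi(\hat\partial_i)=\partial_i$, $\Psi(e^\ast_\gamma\hat\partial_\alpha)=e^\ast_\gamma\partial_\alpha$ and the coordinate form of $p$, and compare with linearity in the lower and the Leibniz rule in the upper argument. Your extra observation that the odd auxiliary functions $e^\ast_\delta$ vanish on a hyperplane rather than only on the zero section (handled by density and the already established well-definedness) is a point the paper glosses over, and your treatment of it is sound.
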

\begin{proof}
Comparing the results from applying the above definition of $\nabla^{TE}$ on the appropriate derivations in $\mathcal L_{\bar 0}^\prime$ with the results obtained by using linearity for the auxiliary functions in the lower and Leibniz rule in die upper argument, yields the Christoffel symbols.
\end{proof}
By direct calculation we have for the metric coefficients:
\begin{lem}\label{metric}
 The metric coefficients of $g^{TE}$ with respect to $( \partial_i,\partial_\alpha)$ are:
 \begin{align*}
& \mbox{for } |g|=0 \ :\qquad && g_{ij}^{TE}= \widetilde{g}_{ij} \circ \pi&&  g_{i\alpha}^{TE}=0&& g_{\alpha i}^{TE}=0&& g_{\alpha\beta}^{TE}=0\\
& \mbox{for } |g|=1 \ : && g_{ij}^{TE}= \sum_\beta e_\beta^\ast\left(\widetilde{\hat \partial_\beta g_{ij}}\circ \pi \right)&&  g_{i\alpha}^{TE}=-\widetilde{g}_{i\alpha}\circ \pi&&   g_{\alpha i}^{TE}=-\widetilde{g}_{\alpha i}\circ \pi&&   g_{\alpha\beta}^{TE}=0
\end{align*} 
\end{lem}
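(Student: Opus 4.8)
The plan is to read off each coefficient $g^{TE}(\partial_r,\partial_s)$ directly from the case formula (\ref{eq:15}) of Step 3 of Proposition \ref{g}, once the coordinate frame $(\partial_i,\partial_\alpha)$ and the composite $\psi\circ pr$ have been made fully explicit. The whole lemma is then a finite case distinction on parities, with the auxiliary functions cancelling in the quotient $g^{TE}(\partial_r,\partial_s)=\frac{1}{h_{X^\prime}h_{Y^\prime}}g^\prime(h_{X^\prime}\partial_r,h_{Y^\prime}\partial_s)$.

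First I would identify the images of the coordinate fields under $\Psi$. Using the explicit form of $\psi$, which sends a degree-$0$ superfunction $w$ to $\pi^\ast w$ and the frame element $e_\beta^\ast\in\Gamma_{E^\ast}$ to the fiber coordinate $e_\beta^\ast$ on $E$, a short conjugation computation in the chosen trivialization gives $\Psi(\hat\partial_i)=\partial_i$ and $\Psi(\hat\partial_\alpha)=\partial_\alpha$. In particular $\partial_i=\Psi(\hat\partial_i)\in\mathcal L_{\bar 0}^\prime$ carries transported parity $0$, while $\partial_\alpha=\Psi(\hat\partial_\alpha)\in\mathcal L_{\bar 1}^\prime$ carries transported parity $1$. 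Hence $\partial_i$ needs no auxiliary function (one takes $h_{\partial_i}=1$), whereas for $\partial_\alpha$ one picks an odd $h_{\partial_\alpha}\in\mathcal F^\prime$, that is a fiber-linear function, keeping in mind that it will cancel in the final quotient by the well-definedness already established in Proposition \ref{g}.

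Next I would unwind $\psi\circ pr$ on the metric coefficients $g_{rs}=g(\hat\partial_r,\hat\partial_s)$. Since $pr$ projects $\mathcal O_\mathcal M$ onto $\mathcal F=\Lambda^0\oplus\Lambda^1$, its effect splits by parity: on an even superfunction $F$ it retains the degree-$0$ part $\widetilde F$, so $(\psi\circ pr)(F)=\widetilde F\circ\pi$; on an odd superfunction $F$ it retains the degree-$1$ part, which equals $\sum_\beta\widetilde{\hat\partial_\beta F}\,e_\beta^\ast$, so that $(\psi\circ pr)(F)=\sum_\beta(\widetilde{\hat\partial_\beta F}\circ\pi)\,e_\beta^\ast$ as a fiber-linear function on $E$. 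Which description applies is dictated by the parity of $g_{rs}$, namely $|g|$ plus the parities of the two frame fields.

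With these two ingredients the proof is bookkeeping in (\ref{eq:15}). For $|g|=0$ only the first branch $|g|=|X^\prime|=|Y^\prime|=0$ survives, giving $g^{TE}_{ij}=\widetilde g_{ij}\circ\pi$ (here $g_{ij}$ is even), while any odd index forces the ``else'' branch and hence $g^{TE}_{i\alpha}=g^{TE}_{\alpha i}=g^{TE}_{\alpha\beta}=0$. For $|g|=1$ the middle branch $|g|(1-|X^\prime||Y^\prime|)=1$ is active precisely when at most one index is odd: it yields $g^{TE}_{ij}=\sum_\beta e_\beta^\ast(\widetilde{\hat\partial_\beta g_{ij}}\circ\pi)$ (now $g_{ij}$ is odd, so the degree-$1$ extraction above appears), and, with the sign $(-1)^{|X^\prime|+|Y^\prime|}=-1$ from the mixed case, $g^{TE}_{i\alpha}=-\widetilde g_{i\alpha}\circ\pi$ and $g^{TE}_{\alpha i}=-\widetilde g_{\alpha i}\circ\pi$ (here $g_{i\alpha}$ is even); the two-odd-index coefficient $g^{TE}_{\alpha\beta}$ again falls into the ``else'' branch and vanishes. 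I expect the only delicate point to be the odd case of $\psi\circ pr$: one must verify that the degree-$1$ component of the odd $g_{ij}$ is indeed $\sum_\beta\widetilde{\hat\partial_\beta g_{ij}}\,e_\beta^\ast$ and that the odd auxiliary functions cancel cleanly, both of which follow from the construction underlying Proposition \ref{g}.
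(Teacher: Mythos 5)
Your proposal is correct and is exactly the ``direct calculation'' the paper invokes without writing out: you identify $\Psi(\hat\partial_i)=\partial_i$, $\Psi(\hat\partial_\alpha)=\partial_\alpha$ with transported parities $0$ and $1$, unwind $\psi\circ pr$ according to the parity of $g_{rs}$, and read the branches of (\ref{eq:15}) off correctly, including the sign $(-1)^{|X^\prime|+|Y^\prime|}=-1$ in the mixed case and the vanishing of the two-odd-index coefficients. No gaps.
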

We conclude from the metric coefficients:
\begin{cor}
 In the case $|g|=1$ the symmetric bilinear form $g^{TE}$ is a pseudo-Riemannian metric on the manifold $E$. In the case $|g|=0$ the form $g^{TE}$ is degenerate as soon as $\cm$ has positive odd dimension.
\end{cor}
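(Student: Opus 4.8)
The plan is to extract both assertions directly from the coefficient table of Lemma \ref{metric}, turning the statement about the symmetric bilinear form $g^{TE}$ into a pointwise question of linear algebra on each tangent space $T_eE$. Throughout I would organize the computation according to the splitting of the local frame $(\partial_i,\partial_\alpha)$ into base directions $\partial_i$ and fibre directions $\partial_\alpha$, since this is exactly the splitting along which the table is block-structured. Note that $g^{TE}$ is already known to be a smooth symmetric $\mathcal C^\infty_E$-bilinear tensor by Proposition \ref{g}, so the only thing left to decide is its non-degeneracy, which is a condition on the Gram matrix in the frame $(\partial_i,\partial_\alpha)$ at each point.

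The even case $|g|=0$ is immediate. Lemma \ref{metric} gives $g^{TE}_{i\alpha}=g^{TE}_{\alpha i}=g^{TE}_{\alpha\beta}=0$, so every fibre coordinate field $\partial_\alpha$ pairs to zero against all $\partial_j$ and all $\partial_\beta$; that is, each $\partial_\alpha$ lies in the radical of $g^{TE}$. Since $\mathcal M=\Pi E$ has odd dimension equal to $\mathrm{rank}(E)$, a positive odd dimension forces at least one fibre coordinate, and the corresponding nonzero $\partial_\alpha$ exhibits the degeneracy.

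The substantive case is $|g|=1$. By Remark \ref{rem:b} an odd metric forces $\dim M=\mathrm{rank}(E)=:n$, so each $T_eE$ is $2n$-dimensional with base and fibre blocks each of size $n$. In the frame $(\partial_i,\partial_\alpha)$ the Gram matrix of $g^{TE}$ has, by Lemma \ref{metric}, the anti-block-triangular shape
\[
\begin{pmatrix} (g^{TE}_{ij}) & -(\widetilde g_{i\alpha}\circ\pi) \\ -(\widetilde g_{\alpha i}\circ\pi) & 0 \end{pmatrix},
\]
with vanishing fibre–fibre block. Swapping the two block rows makes this block triangular, so $\det=(-1)^n\det(\widetilde g_{i\alpha})\det(\widetilde g_{\alpha i})$ independently of the base–base block; hence non-degeneracy of $g^{TE}$ at $e$ is equivalent to invertibility of the reduced mixed block $(\widetilde g_{i\alpha})$ at $\pi(e)$. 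Supersymmetry of $g$ gives $\widetilde g_{\alpha i}=\widetilde g_{i\alpha}$, so the two mixed blocks are transposes of one another and only one must be controlled. Thus the whole case reduces to showing that $(\widetilde g_{i\alpha})$ is pointwise invertible.

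The key step, and the main obstacle, is to derive this invertibility from the non-degeneracy of the odd super metric $g$. Here I would use that the Gram supermatrix of $g$ in the frame $(\hat\partial_i,\hat\partial_\alpha)$ is odd: its diagonal blocks $g_{ij},g_{\alpha\beta}$ are odd superfunctions and hence have vanishing reduction, while its off-diagonal blocks $g_{i\alpha},g_{\alpha j}$ are even. Reducing modulo nilpotents therefore leaves precisely the numerical matrix $\left(\begin{smallmatrix} 0 & \widetilde g_{i\alpha} \\ \widetilde g_{\alpha j} & 0 \end{smallmatrix}\right)$. Since a superfunction is invertible iff its reduction is nowhere zero, and an odd Gram supermatrix is invertible precisely when its even off-diagonal blocks are, the hypothesis that $g^\natural\colon \mathcal T_{\mathcal M}\to\Pi\Omega^1_{\mathcal M}$ is an isomorphism translates exactly into invertibility of $(\widetilde g_{i\alpha})$ at every point (this is the same fact underlying the dimension count of Remark \ref{rem:b}). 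Combined with the determinant computation above, this shows $g^{TE}$ is everywhere non-degenerate; being also symmetric and smooth, it is a pseudo-Riemannian metric. I would moreover remark that the purely off-diagonal mixed block forces split signature $(n,n)$, although this refinement is not needed for the statement.
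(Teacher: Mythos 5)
Your proof is correct and follows exactly the route the paper intends: the corollary is stated as an immediate consequence of the coefficient table in Lemma \ref{metric}, and you simply carry out that reading in detail (radical containing the fibre directions when $|g|=0$; anti-block-triangular Gram matrix with invertible mixed block, inherited from non-degeneracy of the odd $g$, when $|g|=1$). The only additions beyond the paper are harmless refinements, such as the signature remark.
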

Comparing graded and non-graded objects we find furthermore:
\begin{lem}
 If $\nabla$ is torsion-free then $\nabla^{TE}$ is torsion-free.
\end{lem}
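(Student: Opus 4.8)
The plan is to exploit the tensoriality of torsion together with the explicit Christoffel symbols computed in Lemma \ref{chris}. Since $\nabla^{TE}$ is a genuine connection on the classical manifold $E$ (Proposition \ref{nabla}), its torsion $T_{\nabla^{TE}}(X,Y)=\nabla^{TE}_XY-\nabla^{TE}_YX-[X,Y]$ is $\mathcal C^\infty_E$-bilinear, so it suffices to evaluate it on the coordinate vector fields $\partial_i,\partial_\alpha$. These commute pairwise, hence for any two coordinate indices $a,b\in\{i,\alpha\}$ one has $T_{\nabla^{TE}}(\partial_a,\partial_b)=\sum_c\big({\Gamma^{TE}}_{ab}^c-{\Gamma^{TE}}_{ba}^c\big)\partial_c$. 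Thus torsion-freeness of $\nabla^{TE}$ is equivalent to the symmetry ${\Gamma^{TE}}_{ab}^c={\Gamma^{TE}}_{ba}^c$ of its Christoffel symbols in the two lower indices.

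Next I would translate the hypothesis into a symmetry of the graded Christoffel symbols of $\nabla$. Because the coordinate superderivations $\hat\partial_s,\hat\partial_u$ commute, torsion-freeness of $\nabla$ reads $0=T_\nabla(\hat\partial_s,\hat\partial_u)=\sum_r\big(\Gamma_{su}^r-(-1)^{|s||u|}\Gamma_{us}^r\big)\hat\partial_r$, that is $\Gamma_{su}^r=(-1)^{|s||u|}\Gamma_{us}^r$ for all indices. The decisive observation is that for the index pairs actually entering the formulas of Lemma \ref{chris}, namely base--base $(i,j)$ and base--fiber $(i,\alpha)$, the sign $(-1)^{|s||u|}$ equals $+1$, so there $\Gamma_{su}^r=\Gamma_{us}^r$, an honest symmetry.

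Feeding this into Lemma \ref{chris} finishes the argument, since the reduction $\widetilde{(\,\cdot\,)}$ and the derivations $\hat\partial_\beta$ preserve equalities of superfunctions. Concretely, $\Gamma_{ij}^k=\Gamma_{ji}^k$ and $\Gamma_{ij}^\alpha=\Gamma_{ji}^\alpha$ give ${\Gamma^{TE}}_{ij}^k={\Gamma^{TE}}_{ji}^k$ and ${\Gamma^{TE}}_{ij}^\alpha={\Gamma^{TE}}_{ji}^\alpha$; the relation $\Gamma_{i\alpha}^\gamma=\Gamma_{\alpha i}^\gamma$ gives ${\Gamma^{TE}}_{i\alpha}^\gamma={\Gamma^{TE}}_{\alpha i}^\gamma$; and the remaining mixed symbols ${\Gamma^{TE}}_{i\alpha}^k={\Gamma^{TE}}_{\alpha i}^k$ as well as the fiber--fiber symbols vanish, so they are trivially symmetric.

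I expect the only genuinely delicate point, and the one worth stating explicitly, to be the interplay of signs with the vanishing symbols: for a fiber--fiber pair $(\alpha,\beta)$ the graded condition yields antisymmetry $\Gamma_{\alpha\beta}^r=-\Gamma_{\beta\alpha}^r$ rather than symmetry, which would obstruct a naive comparison; however these are precisely the symbols ${\Gamma^{TE}}_{\alpha\beta}^k$ and ${\Gamma^{TE}}_{\alpha\beta}^\gamma$ that the reduction sends to zero. Thus the construction discards exactly the antisymmetric contributions, while everything that survives carries the sign $+1$, so symmetry is preserved and $\nabla^{TE}$ is torsion-free.
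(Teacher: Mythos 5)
Your proof is correct, but it takes a genuinely different route from the paper's. The paper argues abstractly: since the torsion tensor is function-bilinear, it suffices to evaluate $T_{\nabla^{TE}}$ on fields in $\mathcal L_{\bar 0}^\prime$, and there one has the transport identity $T_{\nabla^{TE}}(X,Y)=(\Psi\circ p)\bigl(T_\nabla(\Psi^{-1}(X),\Psi^{-1}(Y))\bigr)$ by an analysis of the $\mathbb Z$-grading, so vanishing of $T_\nabla$ forces vanishing of $T_{\nabla^{TE}}$. You instead reduce to coordinate vector fields and check symmetry of the Christoffel symbols index pair by index pair, using the explicit formulas of Lemma \ref{chris}. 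Both are sound. The paper's argument is coordinate-free, runs in parallel with the proof of metric compatibility (Lemma \ref{metric2}), and does not depend on the explicit symbol computation; yours is more elementary and has the virtue of making visible exactly which part of the graded torsion-free condition is used and which is discarded --- in particular your observation that the odd--odd condition $\Gamma_{\alpha\beta}^r=-\Gamma_{\beta\alpha}^r$ is an antisymmetry that would obstruct a naive comparison, but lands precisely on the symbols the reduction sends to zero, is a genuine point that the paper's one-line grading argument leaves implicit. The only caveat is that your proof inherits its rigor from Lemma \ref{chris}, whose own proof in the paper is merely sketched, whereas the paper's argument for the present lemma bypasses the coordinate formulas entirely.
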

\begin{proof}
The torsion tensor is bilinear in functions, so it is sufficient to calculate $T_{\nabla^{TE}}(X,Y)$ for $X,Y \in \mathcal L_{\bar 0}^\prime$. It is $T_{\nabla^{TE}}(X,Y)=(\Psi\circ p)\left(T_\nabla(\Psi^{-1}(X),\Psi^{-1}(Y))\right)$ due to an analysis of the $\mathbb Z$-grading.
\end{proof}
\begin{lem}\label{metric2}
 If $\nabla$ is compatible with $g$ then $\nabla^{TE}$ is compatible with $g^{TE}$. 
\end{lem}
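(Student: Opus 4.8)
The plan is to mirror the proof of the preceding torsion lemma: first observe that metric compatibility is a \emph{tensorial} condition, so that its vanishing can be tested on a generating set, and then transport the computation on such generators back to $\cm$ via $\psi$, $\Psi$, $p$ and $pr$, recognising the graded compatibility tensor $\Theta_{\nabla,g}$, which vanishes by hypothesis.

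First I would write, for $X,Y,Z\in\mathcal T_E(U)$,
\[
\Theta_{\nabla^{TE},g^{TE}}(X,Y,Z):=X(g^{TE}(Y,Z))-g^{TE}(\nabla^{TE}_XY,Z)-g^{TE}(Y,\nabla^{TE}_XZ).
\]
Using $\mathcal C^\infty_E$-linearity of $\nabla^{TE}$ in the direction (\ref{eq:01}), the Leibniz rule (\ref{eq:02}), and the $\mathcal C^\infty_E$-linearity of $g^{TE}$ from Proposition \ref{g}, a short check shows that the derivation terms produced by inserting a factor $f\in\mathcal C^\infty_E$ cancel in every slot, so $\Theta_{\nabla^{TE},g^{TE}}$ is $\mathcal C^\infty_E$-linear in all three arguments. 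Since $\mathcal L^\prime$ generates $\mathcal T_E$ as a $\mathcal C^\infty_E$-module (see (\ref{eq:13})), and since for an odd generator $R^\prime\in\mathcal L_{\bar 1}^\prime(U)$ the auxiliary function $h_{R^\prime}$ is non-zero off the zero section and satisfies $h_{R^\prime}R^\prime\in\mathcal L_{\bar 0}^\prime(U)$, tensoriality propagates any vanishing of $\Theta_{\nabla^{TE},g^{TE}}$ from $\mathcal L_{\bar 0}^\prime$ to $\mathcal L_{\bar 1}^\prime$ (first away from the zero section, then everywhere by continuity) and hence to all of $\mathcal T_E$. It therefore suffices to verify $\Theta_{\nabla^{TE},g^{TE}}(X,Y,Z)=0$ for $X,Y,Z\in\mathcal L_{\bar 0}^\prime$.

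Fixing such $X,Y,Z$ and setting $\tilde X:=\Psi^{-1}(X)$, $\tilde Y:=\Psi^{-1}(Y)$, $\tilde Z:=\Psi^{-1}(Z)\in\mathcal L_{\bar 0}$, I would use the even--even case of (\ref{eq:15}) (whose prefactor equals $1$ for both $|g|=0$ and $|g|=1$) to get $g^{TE}(Y,Z)=\psi\big(pr(g(\tilde Y,\tilde Z))\big)$, and the construction in Proposition \ref{nabla} to get $\nabla^{TE}_XY=\Psi\big(p(\nabla_{\tilde X}\tilde Y)\big)$. Since $\Psi$ is conjugation by $\psi$, the derivation term reads $X(g^{TE}(Y,Z))=\psi\big(\tilde X(pr(g(\tilde Y,\tilde Z)))\big)$. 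The goal is then to pull $pr$ through each operation so as to obtain
\[
\Theta_{\nabla^{TE},g^{TE}}(X,Y,Z)=\psi\Big(pr\big(\Theta_{\nabla,g}(\tilde X,\tilde Y,\tilde Z)\big)\Big),
\]
in which all signs of $\Theta_{\nabla,g}$ are trivial on the even arguments $\tilde X,\tilde Y,\tilde Z$, for both parities of $g$. As $\nabla$ is compatible with $g$, the right-hand side vanishes, which finishes the argument on generators and hence the proof.

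The main obstacle, and the only place needing a genuine computation, is justifying that $pr$ commutes with the relevant operations; this is a $\mathbb Z$-grading bookkeeping of exactly the type invoked in the torsion lemma. Concretely, I expect to need two points. (i) $pr(\tilde X(g(\tilde Y,\tilde Z)))=\tilde X(pr(g(\tilde Y,\tilde Z)))$: an element of $\mathcal L_{\bar 0}$ has $\hat\partial_i$-coefficients of $\mathbb Z$-degree $0$ and $\hat\partial_\alpha$-coefficients of $\mathbb Z$-degree $1$, so it preserves the $\mathbb Z$-grading of $\mathcal O_\mathcal M$ and therefore commutes with the projection $pr$ onto degrees $0$ and $1$. (ii) $pr(g(\nabla_{\tilde X}\tilde Y,\tilde Z))=pr(g(p(\nabla_{\tilde X}\tilde Y),\tilde Z))$: the difference $\nabla_{\tilde X}\tilde Y-p(\nabla_{\tilde X}\tilde Y)$ has $\hat\partial_i$-coefficients of degree $\geq 2$ and $\hat\partial_\alpha$-coefficients of degree $\geq 3$, and pairing these under the $\mathcal O_\mathcal M$-linear map $g(\cdot,\tilde Z)$ lands in degree $\geq 2$, which $pr$ annihilates. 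Assembling (i), (ii) and the analogue of (ii) in the third slot yields the displayed identity and completes the proof.
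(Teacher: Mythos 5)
Your proof is correct and follows essentially the same route as the paper: reduce by tensoriality of the compatibility tensor to arguments $X,Y,Z\in\mathcal L_{\bar 0}^\prime$, and there verify the identity $\Theta_{\nabla^{TE},g^{TE}}(X,Y,Z)=\psi\bigl(pr\bigl(\Theta_{\nabla,g}(\Psi^{-1}(X),\Psi^{-1}(Y),\Psi^{-1}(Z))\bigr)\bigr)$ by the same $\mathbb Z$-grading bookkeeping invoked for the torsion lemma. You simply spell out the details (points (i) and (ii)) that the paper compresses into ``holds with an analogue argument and suffices to prove the lemma.''
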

\begin{proof}
The identity $\Theta_{\nabla^{TE},g^{TE}}(X,Y,Z)=(\Psi\circ p)\left(\Theta_{\nabla,g}(\Psi^{-1}(X),\Psi^{-1}(Y),\Psi^{-1}(Z))\right)$ for $X,Y,Z \in \mathcal L_{\bar 0}^\prime$ holds with an analogue argument and suffices to prove the lemma.
\end{proof}
\begin{rem}\label{rem:a}
 The obvious correction to make $g^{TE}$ non-degenerate in the case  $|g|=0$ by requiring $g_{\alpha\beta}^{TE}=\widetilde{g}_{\alpha\beta} \circ \pi$ in Lemma \ref{metric}, in general destroys compatibility of the connection. In Appendix A a different reduction yielding exactly the information of $\widetilde{g}_{rs} \circ \pi$ for $r,s$ even or odd indexes, is given.
\end{rem}
The lemmas yield the first statement of Theorem \ref{start} and include further:
\begin{cor}\label{coro}
In the case $|g|=1$ we have: If $\nabla$ is Levi-Civita for $g$ then $\nabla^{TE}$ is Levi-Civita for $g^{TE}$.
\end{cor}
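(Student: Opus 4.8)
The plan is to obtain this corollary directly by combining the two preceding inheritance lemmas with the classical fundamental theorem of pseudo-Riemannian geometry; no new computation is required. Recall that, by definition, $\nabla$ being Levi-Civita for $g$ means exactly that $\nabla$ is simultaneously torsion-free and compatible with $g$.

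First I would feed these two hypotheses through the results already established for the reduction. The lemma stating that torsion-freeness is inherited gives that $\nabla^{TE}$ is torsion-free, and Lemma \ref{metric2} gives that $\nabla^{TE}$ is compatible with $g^{TE}$. Thus $\nabla^{TE}$ is a torsion-free, $g^{TE}$-compatible connection on the manifold $E$.

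The step specific to the hypothesis $|g|=1$ is the observation, recorded in the corollary following Lemma \ref{metric}, that in the odd case $g^{TE}$ is non-degenerate, i.e.~a genuine pseudo-Riemannian metric on $E$. This is precisely where oddness is needed: in the even case $g^{TE}$ degenerates once the odd dimension is positive, and then there is no uniquely determined Levi-Civita connection at all, so the statement would have no meaning.

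Finally I would invoke the fundamental theorem: on the pseudo-Riemannian manifold $(E,g^{TE})$ there is a \emph{unique} torsion-free connection compatible with $g^{TE}$, namely its Levi-Civita connection. Since $\nabla^{TE}$ has just been shown to be torsion-free and $g^{TE}$-compatible, uniqueness forces it to coincide with the Levi-Civita connection of $g^{TE}$. Being a clean corollary, the argument presents no real obstacle; the only point worth keeping in mind is that it leans entirely on the non-degeneracy of $g^{TE}$, so the restriction to $|g|=1$ is genuinely necessary for the uniqueness half of the fundamental theorem to apply.
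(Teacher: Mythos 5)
Your argument is correct and is exactly the one the paper intends: the corollary is stated as an immediate consequence of the torsion-freeness lemma, Lemma \ref{metric2}, and the non-degeneracy of $g^{TE}$ for $|g|=1$, combined with uniqueness of the Levi-Civita connection on a pseudo-Riemannian manifold. Your remark that non-degeneracy is precisely where $|g|=1$ enters matches the paper's discussion following Lemma \ref{metric}.
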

\begin{rem}\label{reM}
Note that the reduction map $M \to \mathcal M$ and the zero section $M \to E$ allow pullbacks of $\nabla$, resp. $\nabla^{TE}$ to $M$. Both coincide as connections $\nabla^{TM}$ on $M$. In \cite{SG-TW} it is shown that if $\nabla$ is Levi-Civita with respect to an even Riemannian metric $g$, then $\nabla^{TM}$ is Levi-Civita with respect to the reduction $g^{TM}$ of $g$ on $M$.
\end{rem}
As is evident from the Lemmas \ref{chris} and \ref{metric}, the reduction $\nabla \mapsto \nabla^{TE}$ and $g\mapsto g^{TE}$ looses much information. Hence there is no canonical reconstruction of $\nabla$ and $g$ from the classical objects  $\nabla^{TE}$ and $g^{TE}$.
\subsection{Dependence on diffeomorphisms}\label{sec}
So far we have identified $\mathcal M$ with $\Pi E$, but this identification is fixing one of possibly many isomorphisms $\mathcal O_\mathcal M\cong \Gamma_{\Lambda E^\ast}$. Here we analyze the dependence of our construction on this choice, so its behavior under diffeomorphisms. Let now $\mathcal M=(M,\mathcal O_\mathcal M)$, $\mathcal N=(N,\mathcal O_\mathcal N)$ be two supermanifolds  identified with their Batchelor models, so $\mathcal O_\mathcal M=\Gamma_{\Lambda E^\ast}$ and $\mathcal O_\mathcal N=\Gamma_{\Lambda F^\ast}$ for vector bundles $E\to M$ and $F \to N$. Let $\nabla^\mathcal M$, $\nabla^\mathcal N$ be connections on $\mathcal M$, respectively $\mathcal N$. We mark the sheaves and morphisms $\mathcal F,\ \mathcal L, \ \Psi, \ldots$ with upper index $\mathcal M$, respectively $\mathcal N$. Furthermore let $\Phi:\mathcal M\to \mathcal N$ be a diffeomorphism of supermanifolds with underlying map $\varphi:M\to N$. Then we obtain an  isomorphism of sheaves of $\mathcal O_\mathcal N$-modules on $N$ given by 
$\Phi_\ast:\varphi_\ast\mathcal T_\mathcal M \to \mathcal T_\mathcal N$ via conjugation with $\Phi^\ast$. \par\smallskip
The composition $p^\mathcal N\circ \Phi_\ast$ yields via the identification $\mathcal L_{\bar 1}^\mathcal M\cong \Gamma_E$ and $\mathcal L_{\bar 1}^\mathcal N\cong \Gamma_F$, a vector bundle isomorphism $\hat \Phi:=p^\mathcal N\circ \Phi_\ast|_{\mathcal L_{\bar 1}^\mathcal M}:E \to F$. The later satisfies  $\hat \Phi^\ast(\mathcal F^{\prime \mathcal N})=\varphi_\ast\mathcal F^{\prime \mathcal M}$ and hence induces $\hat\Phi_\ast|_{\varphi_\ast \mathcal L_{\bar 0}^{\prime\mathcal M}}:\varphi_\ast \mathcal L_{\bar 0}^{\prime\mathcal M} \to \mathcal  L_{\bar 0}^{\prime\mathcal N}$.\par \smallskip
We call $\Phi:(\mathcal M, \nabla^\mathcal M)\to(\mathcal N,\nabla^\mathcal N)$ a \textbf{diffeomorphism of supermanifolds with connection}, if it satisfies $\Phi_\ast \circ \nabla^\mathcal M=\nabla^\mathcal N \circ \otimes_\mathbb R^2 \Phi_\ast$. Analogously for the classical map $\hat \Phi:(E,\nabla^{TE})\to(F,\nabla^{TF})$.
 We can follow:
\begin{lem}
 If $\Phi:(\mathcal M, \nabla^\mathcal M)\to(\mathcal N,\nabla^\mathcal N)$ is a {diffeomorphism of supermanifolds with connection}, then $\hat \Phi:(E,\nabla^{TE})\to(F,\nabla^{TF})$ inherits this property.
\end{lem}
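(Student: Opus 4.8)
The plan is to exploit the naturality of the entire reduction construction, which was set up in four steps (restriction--projection, transport via $\Psi$, extension by auxiliary functions, extension to all of $\mathcal T_E$). Since each of these steps is functorial in the data $(\mathcal M, E, \Psi, \psi, p, pr)$, the strategy is to check compatibility at the first stage---on the sheaves $\mathcal L_{\bar 0}^\prime$ where the connection is genuinely determined---and then argue that the defining relations propagate automatically. The key identity to establish is
\[
\hat\Phi_\ast \circ \nabla^{TE} = \nabla^{TF} \circ \otimes_\mathbb R^2 \hat\Phi_\ast
\]
on $\varphi_\ast\mathcal L_{\bar 0}^{\prime\mathcal M}$, after which the uniqueness assertions built into Proposition \ref{nabla} (Steps 3 and 4) force equality on all of $\mathcal T_E$.

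First I would record how $\hat\Phi$ interacts with the structural maps. By definition $\hat\Phi = p^{\mathcal N}\circ\Phi_\ast|_{\mathcal L_{\bar 1}^{\mathcal M}}$, and the excerpt already notes $\hat\Phi^\ast(\mathcal F^{\prime\mathcal N}) = \varphi_\ast\mathcal F^{\prime\mathcal M}$, so that $\hat\Phi_\ast$ maps $\varphi_\ast\mathcal L_{\bar 0}^{\prime\mathcal M}$ to $\mathcal L_{\bar 0}^{\prime\mathcal N}$. The crucial point is the square relating the graded and classical transports:
\[
\hat\Phi_\ast \circ \Psi^{\mathcal M} = \Psi^{\mathcal N}\circ\Phi_\ast, \qquad \hat\Phi^\ast\circ\psi^{\mathcal N} = \psi^{\mathcal M}\circ\Phi^\ast ,
\]
which I would verify by comparing both sides on $\mathcal L_{\bar 0}$, resp.~$\mathcal F$, using that $\Psi$ is conjugation by $\psi$ and that $\hat\Phi$ is by construction the $\mathbb Z$-degree-one part of $\Phi_\ast$. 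Together with the compatibility of $\Phi_\ast$ with the projections $p^{\mathcal M}, p^{\mathcal N}$ (again a degree argument, since $\Phi^\ast$ preserves the grading $\mathcal O_\mathcal M = \mathcal F\oplus\Gamma_{\oplus_{k\geq 2}\Lambda^k E^\ast}$), these squares let me transfer the hypothesis $\Phi_\ast\circ\nabla^{\mathcal M} = \nabla^{\mathcal N}\circ\otimes_\mathbb R^2\Phi_\ast$ through Step 1 and Step 2 to obtain the displayed identity for $\nabla^\prime$ on the even transported derivations.

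Next I would propagate this through Step 3. For homogeneous $X^\prime, Y^\prime\in\mathcal L^{\prime\mathcal M}(U)$ with auxiliary functions $h_{X^\prime}, h_{Y^\prime}$, one checks that $\hat\Phi_\ast(h_{X^\prime}X^\prime) = \hat\Phi^{\ast,-1}(h_{X^\prime})\,\hat\Phi_\ast(X^\prime)$ and that $\hat\Phi^{\ast,-1}(h_{X^\prime})$ serves as a valid auxiliary function for $\hat\Phi_\ast(X^\prime)$; since the Step-3 formula for $\nabla^\prime_{X^\prime}Y^\prime$ is uniquely determined by its value on $h_{X^\prime}X^\prime\otimes h_{Y^\prime}Y^\prime$ and the derivative term $h_{X^\prime}X^\prime(h_{Y^\prime})Y^\prime$---both of which are $\hat\Phi_\ast$-natural by the previous step---the identity extends to all of $\mathcal L^{\prime\mathcal M}$. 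Finally, because $\mathcal L^\prime$ generates $\mathcal T_E$ as a $\mathcal C^\infty_E$-module (Step 4, equation (\ref{eq:13})) and both $\nabla^{TE}$ and $\nabla^{TF}$ satisfy the $\mathcal C^\infty$-linearity (\ref{eq:01}) and Leibniz rule (\ref{eq:02}), the relation $\hat\Phi_\ast\circ\nabla^{TE} = \nabla^{TF}\circ\otimes^2\hat\Phi_\ast$ holds on all vector fields, which is precisely the assertion that $\hat\Phi$ is a diffeomorphism of manifolds with connection.

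I expect the main obstacle to lie in the bookkeeping of Step 3, specifically in verifying that $\hat\Phi_\ast$ sends an auxiliary function for $X^\prime$ to an auxiliary function for $\hat\Phi_\ast(X^\prime)$ of the same parity and non-vanishing off the zero section---i.e.~that the parity and non-degeneracy conditions defining admissible $h_{R^\prime}$ are preserved. This is where the odd part of $\Phi$ enters nontrivially through $\hat\Phi^\ast$, and one must confirm that the well-definedness of $\nabla^\prime$ (independence of the choice of auxiliary functions, established in Proposition \ref{nabla}) is exactly what makes the argument robust against these choices. Everything else reduces to the degree-counting already deployed in the torsion and metric-compatibility lemmas above.
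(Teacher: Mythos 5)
Your strategy coincides with the paper's: reduce to checking $\hat\Phi_\ast\circ\nabla^{TE}=\nabla^{TF}\circ\otimes_\mathbb R^2\hat\Phi_\ast$ on the transported even derivations, translate this back to the graded side, and let the uniqueness built into the extension steps of Proposition \ref{nabla} do the rest. The one point you must repair is the parenthetical claim that $\Phi^\ast$ preserves the decomposition $\mathcal O_\mathcal M=\mathcal F\oplus\Gamma_{\oplus_{k\geq 2}\Lambda^k E^\ast}$ and hence that $\Phi_\ast$ is ``compatible with'' (i.e.\ commutes with) the projections $p^\mathcal M$, $p^\mathcal N$. A diffeomorphism of supermanifolds preserves only the $\mathbb Z_2$-grading and the filtration by the ideal $\Gamma_{\oplus_{k\geq 2}\Lambda^k E^\ast}$, not the $\mathbb Z$-grading --- this is precisely why Batchelor splittings are non-canonical --- so $p^\mathcal N\circ\Phi_\ast\neq\Phi_\ast\circ p^\mathcal M$ in general, and your square $\hat\Phi_\ast\circ\Psi^\mathcal M=\Psi^\mathcal N\circ\Phi_\ast$ does not typecheck without a projection inserted on the right-hand side. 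What is true, and what the paper actually uses, is the one-sided statement: $\Phi_\ast$ and $p^\mathcal N\circ\Phi_\ast$ differ only by terms raising the $\mathbb Z$-degree by at least $2$, and these are erased by an outer $p^\mathcal N$; this permits inserting or deleting $p^\mathcal M$ and $p^\mathcal N$ inside $p^\mathcal N\circ\Phi_\ast\circ p^\mathcal M\circ\nabla^\mathcal M$ and $p^\mathcal N\circ\nabla^\mathcal N\circ\otimes^2(p^\mathcal N\circ\Phi_\ast)$ until both reduce to $p^\mathcal N\circ\Phi_\ast\circ\nabla^\mathcal M$ via the hypothesis $\Phi_\ast\circ\nabla^\mathcal M=\nabla^\mathcal N\circ\otimes_\mathbb R^2\Phi_\ast$. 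With that correction your argument is the paper's; the Step-3 bookkeeping with auxiliary functions that you anticipate as the main obstacle is not needed, since, as you yourself observe, the well-definedness established in Proposition \ref{nabla} already makes the extension from $\mathcal L_{\bar 0}^\prime$ to $\mathcal T_E$ canonical and hence automatically natural under $\hat\Phi_\ast$.
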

\begin{proof} 
By section \ref{sec:con} it is sufficient to prove $\hat\Phi_\ast \circ \nabla^{TE}=\nabla^{TF} \circ \otimes_\mathbb R^2 \hat\Phi_\ast$ on $\mathcal L_{\bar 0}^{\prime \mathcal N}$. According to the proof of Proposition \ref{nabla} it remains to show: 
\begin{align}\label{eqeq} 
p^\mathcal N\circ\Phi_\ast\circ p^\mathcal M\circ \nabla^\mathcal M= p^\mathcal N\circ \nabla^\mathcal N\circ \otimes^2(p^\mathcal N\circ \Phi_\ast )
\end{align}
In local coordinates $\Phi_\ast $ and $p^\mathcal N\circ \Phi_\ast$ only differ in terms increasing the $\mathbb Z$-degree by 2 or more. On the right hand side of (\ref{eqeq}) these terms are finally erased by $p^\mathcal N$.  Hence it equals $p^\mathcal N\circ\Phi_\ast\circ \nabla^\mathcal M$. With analogue arguments it is possible to insert $p^\mathcal M$.
\end{proof}
Two different identifications of a supermanifold $\mathcal M$ with its Batchelor model can be regarded as a diffeomorphism of supermanifolds with connections on the model $(M,\Gamma_{\Lambda E^\ast})$ onto itself with underlying map $id_M$. Hence we can finally follow:
\begin{cor}
For a supermanifold $\mathcal M$ with connection $\nabla$ the associated manifold with connection $(E,\nabla^{TE})$ is well defined up to diffeomorphisms of manifolds with connection induced by smooth bundle automorphisms of $\pi:E\to M$.
\end{cor}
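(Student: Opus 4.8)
The plan is to realize any two Batchelor identifications of $\mathcal M$ as a single diffeomorphism of supermanifolds with connection and then invoke the preceding lemma. First I would fix two isomorphisms $j_1,j_2:\mathcal O_\mathcal M\to\Gamma_{\Lambda E^\ast}$; Batchelor's theorem guarantees that the underlying bundle $E\to M$ is determined up to isomorphism, so after identifying the bundles we may take the same target $\Gamma_{\Lambda E^\ast}$. Transporting the abstract connection $\nabla$ on $\mathcal M$ through $j_1$, resp.~$j_2$, produces two connections $\nabla_1:=(j_1)_\ast\nabla$ and $\nabla_2:=(j_2)_\ast\nabla$ on the model $(M,\Gamma_{\Lambda E^\ast})=\Pi E$. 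Applying the construction of Section \ref{sec:con} to each of these data yields the two reductions $(E,\nabla^{TE}_1)$ and $(E,\nabla^{TE}_2)$ that we must compare.

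The key observation is that the change-of-identification map $\Phi:=j_2\circ j_1^{-1}$ is an automorphism of the Batchelor algebra $\Gamma_{\Lambda E^\ast}$, hence a diffeomorphism $\Phi:\Pi E\to\Pi E$ whose underlying map is $id_M$. Because $\nabla_1$ and $\nabla_2$ are by construction the single abstract connection $\nabla$ pushed forward along $j_1$, resp.~$j_2$, a direct functoriality check gives $\Phi_\ast\circ\nabla_1=\nabla_2\circ\otimes_\mathbb R^2\Phi_\ast$; that is, $\Phi$ is a diffeomorphism of supermanifolds with connection $(\Pi E,\nabla_1)\to(\Pi E,\nabla_2)$ in the sense defined above. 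This step does all the work and is essentially formal, since both $\nabla_1$ and $\nabla_2$ are the same object written in two models, so the transition map automatically respects them.

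With this in hand I would apply the preceding lemma to $\Phi$, obtaining that the induced classical map $\hat\Phi:=p^\mathcal M\circ\Phi_\ast|_{\mathcal L_{\bar 1}^\mathcal M}:E\to E$ is a diffeomorphism of manifolds with connection $(E,\nabla^{TE}_1)\to(E,\nabla^{TE}_2)$. It remains only to identify $\hat\Phi$ as a bundle automorphism: by its very construction $\hat\Phi$ covers the underlying map $\varphi=id_M$ and is fibrewise the linear isomorphism obtained by restricting $p^\mathcal M\circ\Phi_\ast$ to $\mathcal L_{\bar 1}^\mathcal M\cong\Gamma_E$, so it is a smooth vector bundle automorphism of $\pi:E\to M$. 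This yields exactly the asserted uniqueness up to such automorphisms.

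I expect the only genuine subtlety to lie in the bookkeeping around Batchelor's theorem at the outset, namely the claim that two Batchelor models of the same $\mathcal M$ have isomorphic underlying bundles, so that the two reductions really live on one and the same $E$. Once one grants that both identifications may be taken with target $\Gamma_{\Lambda E^\ast}$, the remainder is a direct composition of the preceding lemma with the elementary fact that a change of Batchelor model is a connection-preserving diffeomorphism over $id_M$, whose associated classical map is a bundle automorphism.
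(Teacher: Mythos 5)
Your proposal is correct and follows essentially the same route as the paper, which likewise observes that two Batchelor identifications differ by a connection-preserving self-diffeomorphism of $(M,\Gamma_{\Lambda E^\ast})$ over $id_M$ and then invokes the preceding lemma, with the induced map $\hat\Phi$ on $E$ being a bundle automorphism. Your write-up merely makes explicit the bookkeeping (the transported connections $\nabla_1,\nabla_2$ and the formal identity $\Phi_\ast\circ\nabla_1=\nabla_2\circ\otimes_\mathbb R^2\Phi_\ast$) that the paper leaves implicit.
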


\section{Geodesics}
In this section we give the identifications of geodesics of $(\mathcal M,\nabla)$ and those of $(E,\nabla^{TE})$. 
Let $\Phi:\sbb \to \cm$ be a supercurve and $\nabla$ be a connection on $\cm$. It is shown in \cite{SG-TW} that $\nabla$ can be pulled back in a unique way to an $\bb$-linear map
$$ \hat \nabla: \Phi_\ast \mathcal T_{\sbb} \otimes_\mathbb R Der(\mathcal O_\mathcal M ,\Phi_\ast \mathcal O_{\sbb})\to  Der(\mathcal O_\mathcal M ,\Phi_\ast \mathcal O_{\sbb})$$
being $\Phi_\ast \mathcal O_{\sbb}$-linear in the first argument and satisfying the appropriate  Leibniz rule in the second argument. Filling in the vector field $\Phi_\ast {\partial_t} \in \Phi_\ast\mathcal T_{\sbb}$ in the first argument we obtain the even $\bb$-linear operator $\frac{\nabla}{d t}$ satisfying $$\frac{\nabla}{d t}(fX)= \frac{\partial f}{\partial t} \cdot  X+(-1)^{|X||f|}f \cdot \frac{\nabla}{d t}(X)$$ for $f \in \Phi_\ast \mathcal O_{\sbb}$ and $X \in Der(\mathcal O_\mathcal M ,\Phi_\ast \mathcal O_{\sbb})$.
\begin{de} Let $\cm$ be a supermanifold equipped with a connection $\nabla$. A morphism 
$\Phi:\sbb\to \cm$ is called a geodesic with respect to $\nabla$ if and only if 
\begin{equation}
 \frac{\nabla}{dt}(\Phi_*{\partial_t})=0.\label{eqGeo}
\end{equation}
\end{de}
\begin{rem}
(1) Note that besides this definition of a geodesic as it appears in \cite{SG-TW}, there exists a different definition analyzed by Goertsches (see \cite{OG}) regarding more than one parameter as dynamic parameters. Details on the difference can be found in \cite{SG-TW}. \\
(2) The above definition and the subsequently quoted proposition are given in \cite{SG-TW} for the more general notion of curves $\Phi:\bb \times \mathcal S \to \mathcal M$ with arbitrary supermanifold $\cs$.
\end{rem}
We quote from \cite{SG-TW}:
\begin{prop} \cite{SG-TW}
 Let $(\cm,\nabla)$ be a supermanifold with connection. For any initial condition in $T\cm(\mathbb R^{0|1})$ there exists a unique geodesic $\Phi:\sbb\to \cm$. In local homogeneous coordinates $(q_s)$ on $\mathcal M$ a geodesic satisfies, and is well defined up to the initial condition, by the system of differential equations in $\mathcal O_{\mathbb R^{1|1}}$:
 \begin{eqnarray}
  \pa[t]^2 \Phi^*(q_s)  + \sum_{u,r}  \pa[t]\Phi^*(q_u)\pa[t]\Phi^*(q_r)\Phi^*(\Gamma_{ur}^s)=0\quad\forall\ s.
\label{equageodesic}
\end{eqnarray}
\end{prop}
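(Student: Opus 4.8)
The plan is to prove the statement in two stages: derive the coordinate system (\ref{equageodesic}) from the intrinsic condition $\frac{\nabla}{dt}(\Phi_*\partial_t)=0$, and then obtain existence and uniqueness by reducing that system to a classical system of ordinary differential equations for the component functions of the $\Phi^*(q_s)$. Throughout I work on a coordinate domain and use the pulled-back connection $\hat\nabla$ together with the operator $\frac{\nabla}{dt}=\hat\nabla_{\Phi_*\partial_t}$ and its Leibniz rule as recalled above.

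First I would expand $\Phi_*\partial_t$ in the local frame of pulled-back coordinate derivations. Writing $X_s\in Der(\om,\Phi_*\mathcal{O}_{\sbb})$ for the derivation $g\mapsto\Phi^*(\partial_{q_s}g)$, the chain rule gives $\Phi_*\partial_t=\sum_s\partial_t(\Phi^*(q_s))\,X_s$, with $|X_s|=|q_s|$. The decisive input is the pullback property of $\hat\nabla$ applied to $X_u=\Phi^*\partial_{q_u}$: using $\om$-linearity in the first slot together with $\Phi_*\partial_t=\sum_r\partial_t(\Phi^*(q_r))X_r$, one gets $\frac{\nabla}{dt}(X_u)=\sum_{r,s}\partial_t(\Phi^*(q_r))\,\Phi^*(\Gamma_{ru}^s)\,X_s$, where $\Gamma_{ru}^s$ are the Christoffel symbols of $\nabla$ defined earlier. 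Applying the Leibniz rule for $\frac{\nabla}{dt}$ to $\sum_u\partial_t(\Phi^*(q_u))X_u$ then produces the second-derivative terms $\partial_t^2(\Phi^*(q_u))X_u$ plus the terms coming from $\frac{\nabla}{dt}(X_u)$. Since the $X_s$ form a local frame of $Der(\om,\Phi_*\mathcal{O}_{\sbb})$, the geodesic equation is equivalent to the vanishing of the coefficient of each $X_s$, which after relabelling the dummy indices $u,r$ is (\ref{equageodesic}).

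I expect the technical heart of this first stage to be the Koszul sign bookkeeping. The Leibniz rule contributes a factor $(-1)^{|X_u|\,|\partial_t\Phi^*(q_u)|}=(-1)^{|q_u|}$, and re-indexing so that the Christoffel symbol reads $\Gamma_{ur}^s$ with the velocity factors in the order $\partial_t\Phi^*(q_u)\,\partial_t\Phi^*(q_r)$ forces graded commutation of the two velocities, contributing $(-1)^{|q_u||q_r|}$. Verifying that these signs, under the conventions $df(X)=(-1)^{|X||f|}X(f)$ and the stated Leibniz rule, reorganise into the displayed sign-free form — equivalently, that only the suitably graded-symmetric combination of the two lower Christoffel indices survives — is the main obstacle of the coordinate computation.

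For existence and uniqueness I would pass to components. Substituting the local expression of $\Gamma_{ur}^s$ along the curve expresses each $\Phi^*(\Gamma_{ur}^s)$ as a superfunction that is polynomial in the odd $\Phi^*(q_\bullet)$ and smooth in the even ones, the nilpotency of the odd coordinates truncating it to a finite sum. Expanding (\ref{equageodesic}) into its homogeneous components therefore yields a coupled system of classical second-order ODEs in the finitely many component functions, with smooth right-hand side, to which the classical existence and uniqueness theorem applies: prescribing the components of $\Phi^*(q_s)$ and of $\partial_t\Phi^*(q_s)$ at $t=0$ determines a unique solution. Identifying these data with a point of $T\cm(\sbb[0|1])$ — the space of initial conditions, which Corollary \ref{cor:xy} identifies with $TE$ — then yields the asserted bijection between initial conditions and geodesics. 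The remaining delicate point is to make this reduction faithful, so that no information is lost through vanishing of odd-odd velocity products; this is most cleanly handled in the $\cs$-family version $\Phi:\bb\times\cs\to\cm$ noted in the remark above, where enough odd parameters are present to keep the component system genuinely second order in every field.
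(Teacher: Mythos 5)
The paper itself offers no proof of this proposition: it is quoted verbatim from \cite{SG-TW}, so there is no in-house argument to measure yours against. That said, your two-stage plan --- expand $\Phi_\ast\partial_t$ in the frame $X_s=\Phi^\ast\circ\partial_{q_s}$, apply the Leibniz rule of $\frac{\nabla}{dt}$ together with the $\Phi_\ast\mathcal O_{\sbb}$-linearity of $\hat\nabla$ in its first slot to obtain (\ref{equageodesic}), then split each $\Phi^\ast(q_s)$ into its finitely many coefficient functions along $\mathcal O_{\sbb}\cong\mathcal C^\infty_{\mathbb R}\oplus\mathcal C^\infty_{\mathbb R}\cdot\tau$ and invoke classical existence and uniqueness for the resulting second-order system --- is exactly the strategy of the cited source, and it is sound.

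Two caveats. First, the one genuine gap is that you single out the Koszul sign verification as ``the main obstacle'' and then do not carry it out; since the entire content of the coordinate form (\ref{equageodesic}) is that the residual product of the two signs you name disappears, this step cannot be deferred. Over $\sbb$ the check simplifies considerably: $\partial_t\Phi^\ast(q_u)\,\partial_t\Phi^\ast(q_r)=0$ whenever both $q_u,q_r$ are odd (because $\tau^2=0$), so only the even--even and mixed even--odd cases survive, and in the mixed case the two velocities commute without sign; what remains is a finite comparison against the sign conventions built into the definition of $\hat\nabla$ in \cite{SG-TW}, whose outcome is recorded in the paper's own expansion (\ref{sys}), where both mixed symbols $\Gamma_{j\beta}^\alpha$ and $\Gamma_{\beta j}^\alpha$ enter with coefficient $+1$. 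You should actually perform this comparison rather than assert it. Second, your closing worry about the component reduction being ``faithful'' is misplaced for this statement: a morphism $\sbb\to\cm$ is completely determined by the coefficient functions of the $\Phi^\ast(q_s)$, and (\ref{equageodesic}) is literally equivalent to the classical system (\ref{sys}) in those functions, so existence and uniqueness for a prescribed initial value in $T\cm(\sbb[0|1])\cong TE$ follow with no loss. The $\cs$-family version $\Phi:\bb\times\cs\to\cm$ is relevant only if one wants the geodesic equation to detect all of $\nabla$ (for instance the purely odd Christoffel symbols $\Gamma_{\alpha\beta}^s$, which (\ref{equageodesic}) over $\sbb$ never sees); it plays no role in proving the proposition as stated.
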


We identify $\cmt$ with a Batchelor model $\mathcal O_\mathcal M=\Gamma_{\Lambda E^\ast}$ as before. A supercurve $\Phi:\sbb\to\cm$ is then well defined by the even morphism of sheaves of $\mathcal C^\infty_M$-modules $\Phi^\ast|_\mathcal F:\mathcal F \to \Phi_\ast \mathcal O_{\mathbb R^{1|1}}$. In local coordinates as given in the introduction, the equations (\ref{equageodesic}) become due to the low odd dimension of $\sbb$:
\begin{align}\label{sys}\nonumber
 &\pa[t]^2 \Phi^*(x_i)  + \displaystyle{\sum_{j,k}}  \pa[t]\Phi^*(x_j)\pa[t]\Phi^*(x_k)\Phi^*(\Gamma_{jk}^i)=0\\ 
 &\pa[t]^2 \Phi^*(e_\alpha^\ast)+\displaystyle{\sum_{j,k}} \pa[t]\Phi^*(x_j)\pa[t]\Phi^*(x_k)\Phi^*(\Gamma_{jk}^\alpha) \\ & \qquad \qquad\qquad\qquad + \displaystyle{\sum_{j,\beta}} \pa[t]\Phi^*(x_j)\pa[t]\Phi^*(e_\beta^\ast)\left(\Phi^*(\Gamma_{j\beta}^\alpha)+\Phi^*(\Gamma_{\beta j}^\alpha)\right)=0\nonumber 
\end{align}
for all $i$ and $\alpha$. Note further that by Lemma \ref{chris} and again due to the low odd dimension of $\sbb$, we can replace in (\ref{sys})
\begin{align*}
&\Phi^*(\Gamma_{jk}^i) \quad \mbox{ by  }\quad (\Phi^*\circ \psi^{-1})({\Gamma^{TE}}_{jk}^i) \ , \qquad&&\Phi^*(\Gamma_{j\beta}^\alpha) \quad \mbox{ by  }\quad (\Phi^*\circ \psi^{-1})({\Gamma^{TE}}_{j\beta}^\alpha) \ ,\\ &\Phi^*(\Gamma_{\beta j}^\alpha) \quad \mbox{ by  }\quad (\Phi^*\circ \psi^{-1})({\Gamma^{TE}}_{\beta j}^\alpha) \qquad \mbox{ and }\qquad &&\Phi^*(\Gamma_{jk}^\alpha) \quad \mbox{ by  }\quad (\Phi^*\circ \psi^{-1})({\Gamma^{TE}}_{jk}^\alpha)
\end{align*}
without modifying the differential equations for the $\Phi^\ast(x_i)$ and $\Phi^\ast(e_\alpha^\ast)$.\par \smallskip
As it is mentioned in section \ref{curve}, $\Phi$ can be associated to the curve 
$\gamma_\Phi:\mathbb R \to E$ which is well defined by the morphism of sheaves of $\mathcal C^\infty_M$-modules  $\gamma_\Phi^\ast|_{\mathcal F^\prime}:\mathcal F^\prime \to \gamma_{\Phi\ast}\mathcal C^\infty_\mathbb R$. The identification of curves $\Phi$ and $\gamma_\Phi$ is given on $\mathcal F$ by $T \circ \Phi^\ast|_\mathcal F=\gamma_{\Phi}^\ast|_{\mathcal F^\prime} \circ \psi$ with $T(f+h\tau ):=f+h$, with $\tau$ being the odd parameter in $\mathcal O_{\sbb}$. Replacing this information in (\ref{sys}), we obtain by Lemma \ref{chris} the classical defining equations in $\gamma_{\Phi}^\ast(x_i)$ and $\gamma_{\Phi}^\ast(e_\alpha^\ast)$ for geodesics of $(E,\nabla^{TE})$. An initial condition $A\in T\cm(\mathbb R^{0|1})$ reduces to a pair $\alpha_A=(\tilde A,\hat A)$ consisting of the underlying point $\tilde A\in TM$ and the image $\hat A$ of $\partial_\tau$ in $Der_{\tilde A}(\mathcal O_{T\cm},\mathbb R)_{\bar 1}\cong TE_{\tilde A}$. This gives the correspondence of initial conditions in section \ref{ini}. Together we obtain the second part of Theorem \ref{start}. More precisely we have:
\begin{cor}
 The  bijective identification of initial conditions for geodesics in $(\mathcal M,\nabla)$, resp. in $(E,\nabla^{TE})$ given in Corollary \ref{cor:xy}, induces a bijective identification of the geodesics on both objects. This identification  is the restriction of the identification of curves in Corollary \ref{T}.
\end{cor}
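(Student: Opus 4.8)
The plan is to assemble the corollary from three ingredients that are by now available: the existence-and-uniqueness proposition quoted from \cite{SG-TW} (a geodesic of $(\cm,\nabla)$ exists for, and is determined by, any initial condition in $T\cm(\sbb[0|1])$), its classical analogue (a geodesic of $(E,\nabla^{TE})$ exists for, and is determined by, any initial condition in $TE$), and the two identifications already established, $A\mapsto\alpha_A$ of Corollary \ref{cor:xy} on initial conditions and $\Phi\mapsto\gamma_\Phi$ of Corollary \ref{T} on curves. I would set up the map on geodesics as the composite ``take the initial condition $A$ of $\Phi$, transport it to $\alpha_A$, solve the classical geodesic equation on $E$'', and then prove that this composite coincides with the restriction of $\Phi\mapsto\gamma_\Phi$; bijectivity is then automatic, being a composite of the three bijections above.

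The core step is the equivalence ``$\Phi$ is a geodesic of $(\cm,\nabla)$ if and only if $\gamma_\Phi$ is a geodesic of $(E,\nabla^{TE})$'', which the running text preceding the statement essentially carries out. Because the single odd coordinate $\tau$ of $\sbb$ squares to zero, $\Phi^\ast(x_i)$ is an even function $a_i(t)$ and $\Phi^\ast(e_\alpha^\ast)$ has the form $b_\alpha(t)\tau$; applying $T(f+h\tau):=f+h$ together with the intertwining relation $T\circ\Phi^\ast|_\mathcal F=\gamma_\Phi^\ast|_{\mathcal F^\prime}\circ\psi$ turns the system (\ref{sys}) termwise into the classical geodesic system of $(E,\nabla^{TE})$ in the unknowns $\gamma_\Phi^\ast(x_i)=a_i$ and $\gamma_\Phi^\ast(e_\alpha^\ast)=b_\alpha$. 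Here Lemma \ref{chris} is exactly what certifies that replacing each $\Phi^\ast(\Gamma^\bullet_{\bullet\bullet})$ by $(\Phi^\ast\circ\psi^{-1})({\Gamma^{TE}}_{\bullet\bullet}^\bullet)$ reproduces the reduced Christoffel symbols, while its vanishing entries absorb the terms that drop for degree reasons (those carrying a $\tau^2$). I would emphasize that this is a genuine equivalence, not a one-way implication: the even equation of (\ref{sys}) already lives in $\mathcal C^\infty_\bb$, where $T$ is the identity, and the odd equation has the shape $(\cdots)\tau=0$, which holds precisely when its $\tau$-coefficient $T(\cdots)$ vanishes. Hence (\ref{sys}) holds if and only if its $T$-image does.

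With the equivalence in place, $\Phi\mapsto\gamma_\Phi$ carries geodesics to geodesics in both directions, so the bijection of all curves from Corollary \ref{T} restricts to a bijection of geodesics. It remains to match initial conditions, i.e.\ to verify that the geodesic $\Phi$ with initial condition $A$ is sent to the geodesic $\gamma_\Phi$ whose initial condition is precisely $\alpha_A$; I expect this naturality to be the main obstacle, since the curve identification of Corollary \ref{T} and the initial-condition identification of Corollary \ref{cor:xy} arise from two separate functorial computations (Propositions \ref{E} and \ref{cit2}). The way I would settle it is to read an initial condition off as the $1$-jet at $t=0$ of the relevant curve and to check that the evaluation-and-jet maps intertwine the two identifications: the part $\tilde A\in TM$ records the $1$-jet at $t=0$ of the base projection of $\gamma_\Phi$, and $\hat A\in TE_{\tilde A}$, the image of $\partial_\tau$ under $Der_{\tilde A}(\mathcal O_{T\cm},\bb)_{\bar 1}\cong TE_{\tilde A}$, records the fiber data $b_\alpha(0),\dot b_\alpha(0)$. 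Once this compatibility is confirmed, the composite defined above equals the restriction of $\Phi\mapsto\gamma_\Phi$, which is the assertion of the corollary and completes the second part of Theorem \ref{start}.
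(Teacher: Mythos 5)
Your proposal is correct and follows essentially the same route as the paper: the coordinate system (\ref{sys}), the substitution of the reduced Christoffel symbols via Lemma \ref{chris} and $\psi^{-1}$, and the map $T(f+h\tau)=f+h$ intertwining $\Phi^\ast|_{\mathcal F}$ with $\gamma_\Phi^\ast|_{\mathcal F^\prime}$ are exactly the ingredients the paper uses, and your reduction of the initial condition $A$ to the pair $(\tilde A,\hat A)$ matches the paper's description of $\alpha_A$. Your additional remarks on why the transformation of (\ref{sys}) is a genuine equivalence (the odd equation being a $\tau$-coefficient condition) only make explicit what the paper leaves implicit.
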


\section*{Appendix A}
A connection on a supermanifold $\cm\cong \Pi E$ induces a decomposition of $\mathcal T_\mathcal M$. Here we define reductions of a connection $\nabla$ and a Riemannian metric $g$ on $\cm$ associated to this decomposition. In the case $|g|=1$ the reduction yields an isomorphism $E \cong TM$ as it is stated in Remark \ref{rem:b}. In any case  the reduction carries exactly the information $\widetilde{g}_{rs} \circ \pi$ for $r,s$ even or odd indexes mentioned in Remark \ref{rem:a}.\par\smallskip
Let $\beta:M\to \mathcal M$ be the reduction map and let $g$ be an even or odd Riemannian metric on $\cm$. 
Any connection $\nabla$ on $\mathcal M$ -- notably the Levi-Civita connection on $(\cm,g)$ -- induces a connection $\nabla^E: \mathcal T_M \otimes_\mathbb R \Gamma_{E} \to \Gamma_{E}$ in the following way: for a section $s\in \Gamma_E(U)$ over $U$ open in $M$  let $\partial_s \in \mathcal T_\mathcal M(U)$ be the derivation given by contraction on $\Gamma_{\Lambda E^\ast}(U)$ with $s(x)$ at $x\in U$ and set
$$ \nabla^E(X, s):=(p\circ\nabla)(\beta_\ast(X),\partial_s)\ $$
for $X \in \mathcal T_M(U)$ and $p$ as in section \ref{sec:con}. Note that $\beta_\ast(X)$ is only well defined in $\mathcal T_\mathcal M$ up to an error term of degree at least two in the Grassmann algebra. This term is canceled by $p$. Since $(p\circ\nabla)(\beta_\ast(X),\partial_s) \in \mathcal L_{\bar 1}$ it makes sense to regard $\nabla^E(X, s)$ as an element in $\Gamma_E$ again. 
We quote a useful tool from \cite{Monterde}: 
\begin{theo}\cite{Monterde} \label{teil}
 A connection $\nabla^{E}: \mathcal T_M \otimes_\mathbb R \Gamma_{E} \to \Gamma_{E}$ on the vector bundle ${E} \to M$ induces on the supermanifold $\mathcal M=\Pi E^\ast$ an isomorphism 
 $$ \mathcal T_\mathcal M  \cong \Gamma_{\Lambda E^\ast} \otimes_{\mathcal C^\infty_M} (\mathcal T_M \oplus \Gamma_E)\ .$$
\end{theo}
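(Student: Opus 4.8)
The statement is structural: a linear connection on $E$ produces a global splitting of the tangent sheaf of $\mathcal M$ into a horizontal and a vertical part, exactly mirroring the classical decomposition $TE\cong \pi^\ast TM\oplus\pi^\ast E$ induced by a connection. The plan is to realize $\mathcal T_\mathcal M$ (with $\mathcal O_\mathcal M=\Gamma_{\Lambda E^\ast}$) as the middle term of a short exact sequence of sheaves of $\mathcal O_\mathcal M$-modules and then use $\nabla^E$ to split it. First I would set up
$$0 \longrightarrow \Gamma_{\Lambda E^\ast}\otimes_{\mathcal C^\infty_M}\Gamma_E \longrightarrow \mathcal T_\mathcal M \longrightarrow \Gamma_{\Lambda E^\ast}\otimes_{\mathcal C^\infty_M}\mathcal T_M \longrightarrow 0 ,$$
where the right-hand map sends a superderivation $X$ to its restriction to the degree-zero subalgebra $\mathcal C^\infty_M\subset\mathcal O_\mathcal M$. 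This restriction is an $\mathbb R$-linear derivation of $\mathcal C^\infty_M$ with values in $\Gamma_{\Lambda E^\ast}$, hence an element of $\mathrm{Der}(\mathcal C^\infty_M,\Gamma_{\Lambda E^\ast})=\Gamma_{\Lambda E^\ast}\otimes_{\mathcal C^\infty_M}\mathcal T_M$. Its kernel is the $\mathcal C^\infty_M$-linear derivations of the exterior algebra $\Lambda E^\ast$; such a derivation is determined by its value on the generators $E^\ast$ and therefore identifies canonically with $\Gamma_{\Lambda E^\ast}\otimes_{\mathcal C^\infty_M}\Gamma_E$, namely the vertical contraction derivations $\partial_s$ of the Appendix. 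This identification of the kernel needs no connection, only the observation that in local bundle coordinates the frame $(\hat\partial_\alpha)$ transforms tensorially.

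Next I would use $\nabla^E$ to construct a splitting of the right-hand map. For $X\in\mathcal T_M(U)$ the naive lift $\beta_\ast(X)$ (locally $\hat\partial_i$) is only defined up to vertical terms and fails to glue, since under a change of local frame $(e_\alpha^\ast)$ it acquires the derivatives of the transition functions. I would correct it by subtracting the vertical (contraction) derivation built from the Christoffel symbols of $\nabla^E$: in local coordinates the horizontal lift of $\partial_{x_i}$ is $\hat\partial_i$ minus the contraction encoding $\nabla^E_{\partial_{x_i}}$. This yields an even, $\mathcal C^\infty_M$-linear map $H:\mathcal T_M\to\mathcal T_\mathcal M$ which, extended $\Gamma_{\Lambda E^\ast}$-linearly, is a section of the right-hand map; combining $H$ with the vertical inclusion produces the desired isomorphism of sheaves of $\mathcal O_\mathcal M$-modules.

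The crux, and the step I expect to be the main obstacle, is to verify that the horizontal lift $H$ is independent of the chosen local trivialization, i.e.\ that the Christoffel correction exactly cancels the inhomogeneous, non-tensorial term produced by differentiating the transition functions of $E$. This cancellation is precisely the transformation law of a linear connection, so the computation is forced once one writes it out; but it is the only place where the connection data genuinely enters and the only place where the construction could fail to glue, so it is the heart of the argument. Once global well-definedness of $H$ is established, exactness of the sequence and the splitting property are verified locally on the frame $(\hat\partial_i,\hat\partial_\alpha)$, and the direct-sum decomposition $\mathcal T_\mathcal M\cong \Gamma_{\Lambda E^\ast}\otimes_{\mathcal C^\infty_M}(\mathcal T_M\oplus\Gamma_E)$ follows.
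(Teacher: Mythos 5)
Your proposal is correct, but note that the paper itself gives no proof of this statement: Theorem \ref{teil} is quoted from Monterde--Montesinos \cite{Monterde}, so there is no in-paper argument to compare against. Judged on its own, your argument is the standard and right one: the short exact sequence
$$0 \longrightarrow \Gamma_{\Lambda E^\ast}\otimes_{\mathcal C^\infty_M}\Gamma_E \longrightarrow \mathcal T_\mathcal M \longrightarrow \Gamma_{\Lambda E^\ast}\otimes_{\mathcal C^\infty_M}\mathcal T_M \longrightarrow 0$$
is exactly the graded analogue of $0\to \pi^\ast E\to TE\to\pi^\ast TM\to 0$, the identification of the kernel with the contraction derivations $\partial_s$ is connection-free, and the connection enters only through the splitting. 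You correctly isolate the gluing of the horizontal lift as the one step where the connection data is used. One remark that would shorten that crux: rather than writing $H(\partial_{x_i})=\hat\partial_i$ minus a Christoffel correction in a trivialization and then verifying by hand that the inhomogeneous terms cancel, you can define $H(X)$ invariantly as the covariant derivative $\nabla^{E^\ast}_X$ (dual connection) extended as an even derivation of the exterior algebra $\Gamma_{\Lambda E^\ast}$; this is manifestly global, restricts to $X$ on $\mathcal C^\infty_M$, and is $\mathcal C^\infty_M$-linear in $X$, so the $\Gamma_{\Lambda E^\ast}$-linear extension splits the sequence with no coordinate computation. The only bookkeeping worth making explicit is the grading: the summand $\Gamma_E$ sits in odd degree (the $\partial_s$ are odd derivations, lowering the $\mathbb Z$-degree by one), while $\mathcal T_M$ sits in even degree, so the isomorphism is one of sheaves of $\mathbb Z$-graded $\mathcal O_\mathcal M$-modules, consistent with the local frame $(\hat\partial_i,\hat\partial_\alpha)$ used elsewhere in the paper.
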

So let $(\cm, g)$ be a Riemannian supermanifold with $g:\mathcal T_\mathcal M\otimes\mathcal T_\mathcal M \to \mathcal O_\mathcal M$ and let $\nabla^E$ be the connection on the Batchelor model $E \to M$ induced by the Levi-Civita connection of $g$. We obtain from the composition $\beta^\ast \circ g$ via the isomorphism of the theorem:
\begin{itemize}
 \item in the case $|g|=0$ a pseudo-Riemannian metric $g^{TM}$ on $M$ and a non-degenerate 2-form $\omega^E:\Gamma_E \otimes_{\mathcal C^\infty_M}\Gamma_E \to \mathcal C^\infty_M$,
 \item in the case $|g|=1$ a bundle isomorphism $B^E: E \to TM$   
\end{itemize}
Note that in both situations there is a canonical converse: starting with an arbitrary choice of $(g^{TM},\omega^E, \nabla^E)$, resp. $(B^E, \nabla^E)$ we can reconstruct via Theorem \ref{teil} a Riemannian metric $g$ on $\mathcal M$. But since we loose information in the reduction we will not get back the original metric on $\cm$ in general.

\end{document}